\newtheorem{theorem}{Theorem}
\newtheorem{cor}[theorem]{Corollary}
\newtheorem{lemma}[theorem]{Lemma}
\newtheorem{remark}[theorem]{Remark}
\newcommand{\bs}{\bigskip}
\newcommand{\n}{\noindent}
\newcommand{\Ba}{\ensuremath{\mathcal{B}}}
\newcommand{\Pa}{\ensuremath{\mathcal{P}}}
\newcommand{\F}{\ensuremath{\mathbb{F}}}
\newcommand{\ta}{\Theta}
\newcommand{\de}{\delta}
\newcommand{\al}{\alpha}
\newcommand{\be}{\beta}
\newcommand{\pg}{{\rm PG}}
\newcommand{\md}{\;{\rm mod}\;}
\begin{document}
\title{The maximum  size of a partial spread II: Upper bounds}
\author[Esmeralda N\u{a}stase and Papa Sissokho]{\tiny{Esmeralda N\u{a}stase} \\ \\
 Mathematics Department\\ Xavier University\\
Cincinnati, Ohio 45207, USA\\ \\
Papa Sissokho  \\ \\
Mathematics Department \\ Illinois State University\\ 
Normal, Illinois 61790, USA}
\thanks{nastasee@xavier.edu,  psissok@ilstu.edu}
\begin{abstract}
Let $n$ and $t$ be positive integers with $t<n$, and let $q$ be a prime power. 
A {\em partial $(t-1)$-spread} of ${\rm PG}(n-1,q)$ is a set of $(t-1)$-dimensional subspaces of ${\rm PG}(n-1,q)$ 
that are pairwise disjoint.
Let $r\equiv n\pmod{t}$ with $0\leq r<t$, and let $\ta_i=(q^i-1)/(q-1)$. We essentially prove that if $2\leq r<t\leq \ta_r$,  then the maximum size of a partial 
$(t-1)$-spread of ${\rm PG}(n-1,q)$ is bounded from  above by 
$(\ta_n-\ta_{t+r})/\ta_t+q^r-(q-1)(t-3)+1$. 
We actually  
give tighter bounds when certain divisibility conditions are satisfied. These bounds improve on
the previously known upper bound for the maximum size partial ($t-1$)-spreads of ${\rm PG}(n-1,q)$; for instance, when $\lceil\frac{\ta_r}{2}\rceil+4\leq t\leq \ta_r$ and $q>2$. 
The exact value of the maximum size partial $(t-1)$-spread has been recently determined for $t>\ta_r$ by the authors of this paper (see N\u{a}stase-Sissokho~\cite{NS}). 
\end{abstract}
\maketitle
\n \keywords{\small{\bf Keywords:} Galois geometry; partial spreads; subspace partitions; subspace codes.} 

\bs\n\keywords{\small{\bf Mathematics Subject Classification:} 51E23; 05B25; 94B25.}
\section{Introduction}\label{sec:1}
Let $n$ and $t$ be positive integers with $t<n$, and let $q$ be a prime power.
Let $\pg(n-1,q)$ denote the $(n-1)$-dimensional 
projective space over the finite field $\F_q$.  
A {\em partial $(t-1)$-spread} $S$ of $\pg(n-1,q)$ is a collection 
of $(t-1)$-dimensional subspaces of 
$\pg(n-1,q)$ that are pairwise disjoint.
If $S$ contains all the points of $\pg(n-1,q)$, then it is called 
a {\em $(t-1)$-spread}. It is well-known that a $(t-1)$-spread of $\pg(n-1,q)$ exists if and only if $t$ divides $n$ (e.g., see~\cite[p. 29]{De}).
Besides their traditional relevance to Galois geometry~\cite{EiStSz,GaSz,He1,JuSt}, partial $(t-1)$-spreads are used to build byte-correcting codes (e.g., see~\cite{Et1,HP}), $1$-perfect mixed
error-correcting codes (e.g., see~\cite{HeSc,HP}), orthogonal arrays (e.g., see~\cite{DF}), and subspace codes (e.g., see~\cite{EV,GR,KK}).

\bs\n {\bf Convention:} For the rest of the paper, we assume that $q$ is 
a prime power, and  $n$, $t$, and $r$ are integers such that $n>t>r\geq0$ and $r\equiv n\pmod{t}$.
We also use $\mu_q(n,t)$ to denote the maximum size of any partial $(t-1)$-spread of $\pg(n-1,q)$.

The problem of determining $\mu_q(n,t)$ is a long standing open problem. 
Currently, the best general upper bound for $\mu_q(n,t)$ is given by the following theorem of Drake and Freeman~\cite{DF}.

\begin{theorem}\label{DF}
If $r>0$, then $\mu_q(n,t)\leq\frac{q^n-q^{t+r}}{q^t-1}+q^r-\lfloor\omega\rfloor-1$,\\
where $2\omega=\sqrt{4q^t(q^t-q^r)+1}-(2q^t-2q^r+1)$.
\end{theorem}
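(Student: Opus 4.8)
The plan is to recast the statement as a lower bound on the number of uncovered points. A partial $(t-1)$-spread $S$ of size $s$ covers exactly $s\,\ta_t$ of the $\ta_n$ points of $\pg(n-1,q)$, so the set $\Ha$ of \emph{holes} (uncovered points) has size $N=\ta_n-s\,\ta_t$. Since $n\equiv r\pmod t$ one checks $q^n-1\equiv q^r-1\pmod{q^t-1}$, hence $\ta_n\equiv\ta_r\pmod{\ta_t}$; writing $s=\frac{q^n-q^{t+r}}{q^t-1}+q^r-e$ for an integer $e$ then gives the clean identity $N=\ta_r+e\,\ta_t$. Thus the theorem is exactly the assertion $e\ge\lfloor\omega\rfloor+1$. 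Because $\omega$ is the positive root of $f(X)=X^2+(2q^t-2q^r+1)X-(q^t-q^r)(q^r-1)$ (which is what the stated radical unwinds to), and because $f(0)=-(q^t-q^r)(q^r-1)<0$ forces the other root to be negative, it suffices to prove the strict inequality $f(e)>0$: this yields $e>\omega$, and integrality of $e$ upgrades this to $e\ge\lfloor\omega\rfloor+1$.

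The engine for producing $f(e)>0$ is a double count of hyperplane sections. For a hyperplane $W$, every spread element either lies in $W$ or meets it in a $(t-2)$-flat, so if $b_W$ denotes the number of spread elements contained in $W$, the number of holes in $W$ equals $h_W=\ta_{n-1}-s\,\ta_{t-1}-b_Wq^{t-1}$. First I would record that $h_W\ge0$ and, comparing with $N$, that $h_W\equiv N\pmod{q^{t-1}}$, so the holes form a point set all of whose hyperplane sections are congruent modulo $q^{t-1}$. Next I would assemble the standard moment identities over the $\ta_n$ hyperplanes, namely $\sum_W b_W=s\,\ta_{n-t}$ and $\sum_W b_W(b_W-1)=s(s-1)\,\ta_{n-2t}$ (using that a fixed $(t-1)$-flat lies in $\ta_{n-t}$ hyperplanes and a fixed disjoint pair spans a $(2t-1)$-flat lying in $\ta_{n-2t}$), together with the equivalent two-point identities $\sum_W h_W=N\,\ta_{n-1}$ and $\sum_W h_W(h_W-1)=N(N-1)\,\ta_{n-2}$.

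With these in hand I would extract the quadratic by pushing on the \emph{extremal} hyperplanes rather than on the mean: combine the second-moment identity with the non-negativity and the mod-$q^{t-1}$ rigidity of the $h_W$ to bound how concentrated the holes can be, or equivalently to bound the maximal $b_W$. This is precisely the point at which Drake and Freeman pass to the associated net (the $b_W$ spread elements inside a richest hyperplane form a partial $(t-1)$-spread of $\pg(n-2,q)$, whose dual is a transversal design) and invoke the classical Bose-type bound on a net's deficiency; solving that deficiency bound is what manufactures the radical and pins the threshold at the root of $f$. I expect this extraction to be the main obstacle: a naive Cauchy--Schwarz/variance estimate on the $b_W$ collapses to the trivial $N\le\ta_n$, so the sharp quadratic genuinely requires the integrality and the divisibility structure of the sections (or the net translation), and the sharpest instance is the base case $n=t+r$, where the bound reads $s\le q^r-\lfloor\omega\rfloor-1$ and the entire improvement over the trivial $s\le q^r$ is concentrated. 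Finally I would confirm that the rounded quantity is a genuine non-negative integer, so that replacing $\omega$ by $\lfloor\omega\rfloor$ is legitimate, completing the bound $s\le\frac{q^n-q^{t+r}}{q^t-1}+q^r-\lfloor\omega\rfloor-1$.
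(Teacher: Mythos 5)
The paper does not prove this statement at all: Theorem~\ref{DF} is imported verbatim from Drake and Freeman~\cite{DF}, so there is no internal proof to compare yours against, and your proposal has to stand on its own. Its framing is correct and checks out: with $s=\frac{q^n-q^{t+r}}{q^t-1}+q^r-e$ one does get $N=\ta_n-s\,\ta_t=\ta_r+e\,\ta_t$ for the number of holes; the radical does unwind so that $\omega$ is the positive root of $f(X)=X^2+(2q^t-2q^r+1)X-(q^t-q^r)(q^r-1)$; $N\geq0$ forces $e\geq0$, so the theorem is indeed equivalent to $f(e)>0$; and your hyperplane bookkeeping ($h_W=\ta_{n-1}-s\,\ta_{t-1}-b_Wq^{t-1}$, hence $h_W\equiv N\pmod{q^{t-1}}$, plus the first-moment identity) is accurate --- it is in fact the same Heden--Lehmann-style machinery the paper uses for its own Lemma~\ref{lem:main}, though the paper deploys it for Theorem~\ref{thm:mq}, not for Theorem~\ref{DF}.

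The genuine gap is that the one step carrying all the content --- deriving $f(e)>0$ --- is never executed. You candidly note that the variance/Cauchy--Schwarz route through your moment identities collapses to the trivial bound, and at exactly the decisive moment you write that one should ``pass to the associated net and invoke the classical Bose-type bound on a net's deficiency.'' But that translation and that deficiency inequality \emph{are} the Drake--Freeman proof; invoking them unproved is deferral to the cited source, not a proof. To close the gap you would have to construct the net (or transversal design) from the partial spread and run the Bruck-style claw/clique count whose solution produces the quadratic $f$ --- none of which appears in your text, and your sketch of it (``the $b_W$ elements in a richest hyperplane, whose dual is a transversal design'') is not the actual construction. A secondary defect: your second-moment identity $\sum_W b_W(b_W-1)=s(s-1)\ta_{n-2t}$ is meaningless when $n<2t$, which is precisely the base case $n=t+r$ that you yourself single out as the sharpest instance (there $b_W\leq1$ and the identity must be read as a vacuous $0=0$, with $\ta_{n-2t}$ interpreted as $0$); so the scaffolding you do build does not even cover the critical case, reinforcing that the missing net-theoretic step is the entire proof rather than a finishing detail.
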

The following result is attributed to Andr\'e~\cite{An} and Segre~\cite{Se} for $r=0$. For $r=1$, it is due to 
Hong and Patel~\cite{HP} when $q=2$, 
and Beutelspacher~\cite{Be} when $q>2$.
\begin{theorem}\label{BHP}
If $0\leq r<t$, then $\mu_q(n,t)\geq\frac{q^n-q^{t+r}}{q^t-1}+1$, and equality holds if $r\in\{0,1\}$.
\end{theorem}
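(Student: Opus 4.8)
The plan is to prove the inequality by induction on $n$ in steps of $t$, isolating a single geometric extension step, and then to establish the two claimed equalities by separate upper bounds. Write $f(n)=\frac{q^n-q^{t+r}}{q^t-1}+1$ for the asserted bound. The case $r=0$ is exactly the existence of a $(t-1)$-spread of $\pg(n-1,q)$, so I assume $r\geq 1$ from now on. The key observation is that $f$ telescopes: a one-line computation gives $f(n)-f(n-t)=\frac{q^n-q^{n-t}}{q^t-1}=q^{n-t}$, and the smallest admissible value $n=t+r$ gives $f(t+r)=1$. Since $\pg(t+r-1,q)$ trivially contains a single $(t-1)$-subspace, the base case holds, and the whole lower bound follows once I prove the recursion
\begin{equation*}
\mu_q(n,t)\ \geq\ \mu_q(n-t,t)+q^{\,n-t},\qquad n\geq 2t+r.
\end{equation*}

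For the extension step I work in the vector space $\F_q^n$ and choose a subspace $W$ of dimension $n-t$. By the inductive hypothesis the projective space on $W$ carries a partial $(t-1)$-spread $\Sa_0$ with $|\Sa_0|\geq f(n-t)$, whose members are $t$-dimensional subspaces contained in $W$. It then suffices to exhibit $q^{n-t}$ further $t$-dimensional subspaces of $\F_q^n$, each meeting $W$ only in $0$, pairwise meeting only in $0$, and together covering $\F_q^n\setminus W$; being disjoint from $W$, each is automatically disjoint from every member of $\Sa_0$, so adjoining them to $\Sa_0$ yields a partial $(t-1)$-spread of size $\geq f(n-t)+q^{n-t}=f(n)$. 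To construct them I identify $W$ with the field $\F_{q^{n-t}}$, write $\F_q^n=C\oplus W$ with $\dim C=t$, fix an injective $\F_q$-linear map $\iota\colon C\to W$ (which exists because $t\leq n-t$), and for each $\alpha\in\F_{q^{n-t}}$ put $T_\alpha=\{c+\alpha\,\iota(c):c\in C\}$, the product being taken in $\F_{q^{n-t}}$. Each $T_\alpha$ is a complement of $W$; for $\alpha\neq\beta$ the equation $(\alpha-\beta)\iota(c)=0$ forces $c=0$, so $T_\alpha\cap T_\beta=0$; and a vector count shows the $q^{n-t}$ subspaces $T_\alpha$ partition $\F_q^n\setminus W$. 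This gives the recursion and hence the lower bound. I do not expect this direction to be the real difficulty; the only point to monitor is the inequality $n-t\geq t$, which is exactly what makes both the embedding of $C$ and the induction legitimate.

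It remains to prove that the bound is attained when $r\in\{0,1\}$, which amounts to matching upper bounds. For $r=0$ this is immediate: the members of a partial $(t-1)$-spread are pairwise disjoint, so they cover at most $\ta_n$ points and hence number at most $\ta_n/\ta_t=(q^n-1)/(q^t-1)=f(n)$; since $t\mid n$ a genuine spread exists, giving equality. The case $r=1$ is where the real work lies and is the main obstacle, because the crude covering bound is no longer tight. Here one studies the set $\Ha$ of \emph{holes}, the points lying on no member of the spread: a configuration of size $N$ has $|\Ha|=\ta_n-N\ta_t$, and since $f(n)$ corresponds to exactly $q^t$ holes, the target is equivalent to showing $|\Ha|\geq q^t$. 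The plan is to count incidences between holes and hyperplanes of $\pg(n-1,q)$ — every $(t-1)$-subspace meets every hyperplane in a $(t-2)$- or $(t-1)$-subspace, which pins down how many holes a hyperplane can carry — and to convert this averaging into the lower bound on $|\Ha|$ that forces $N\leq f(n)$. This delicate hyperplane-counting argument is exactly what is carried out by Beutelspacher~\cite{Be} for $q>2$ and by Hong and Patel~\cite{HP} for $q=2$, and I would either reproduce it or invoke those results directly.
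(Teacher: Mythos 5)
The paper offers no proof of Theorem~\ref{BHP}: it is quoted as a known result, with $r=0$ attributed to Andr\'e and Segre, and $r=1$ to Hong--Patel ($q=2$) and Beutelspacher ($q>2$). Measured against that, your proposal is correct and supplies strictly more than the paper does. The telescoping identity $f(n)-f(n-t)=q^{n-t}$, the base case $f(t+r)=1$, and the recursion $\mu_q(n,t)\geq\mu_q(n-t,t)+q^{n-t}$ are all right, and your extension step is sound: each $T_\alpha=\{c+\alpha\,\iota(c):c\in C\}$ is the graph of an $\F_q$-linear map $C\to W$; for $\alpha\neq\beta$ the map $c\mapsto(\alpha-\beta)\iota(c)$ is injective because multiplication by a nonzero element of $\F_{q^{n-t}}$ is bijective, which gives $T_\alpha\cap T_\beta=0$; and the count $q^{n-t}(q^t-1)=q^n-q^{n-t}$ verifies the claimed partition of $\F_q^n\setminus W$ (in fact covering is not even needed for the inequality --- trivial pairwise intersections and trivial intersection with $W$ suffice). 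This is essentially the classical construction behind the lower bound, and your guard condition $t\leq n-t$ is indeed satisfied since the step only runs for $n\geq 2t+r$. The $r=0$ equality via the point count $\ta_n/\ta_t$ is likewise correct, as is your two-line reduction of the $r=1$ equality to the statement that every partial $(t-1)$-spread leaves at least $q^t$ holes. That last statement is the one thing you do not actually prove: your hyperplane-incidence sketch points at the right mechanism (close in spirit to the hyperplane-averaging method this paper itself deploys in Lemma~\ref{lem:main}), but converting the averaging into $|\Ha|\geq q^t$ is the genuinely hard content of Beutelspacher's and Hong--Patel's theorems, and the naive average is not tight enough, especially for $q=2$. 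Since you explicitly invoke those sources for exactly that step --- which is precisely what the paper does --- this is a legitimate citation rather than a gap, and the rest of your argument stands on its own.
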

In light of Theorem~\ref{BHP}, it was later conjectured (e.g., see~\cite{EiSt,HP}) that the value of $\mu_q(n,t)$ is given by the lower bound in Theorem~\ref{BHP}. However, this conjecture was disproved by El-Zanati, Jordon, Seelinger, Sissokho, and Spence~\cite{EJSSS} who
proved the following result.
\begin{theorem}\label{EJSSS} If $n\geq8$ and $n\md{3}=2$, then
$\mu_2(n,3)=\frac{2^n-2^{5}}{7}+2$.
\end{theorem}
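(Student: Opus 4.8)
The plan is to prove the two matching bounds separately, with the lower bound (a construction) carrying essentially all of the difficulty. For the upper bound I would simply invoke Theorem~\ref{DF} with $q=2$, $t=3$, $r=2$. Here
\[
2\omega=\sqrt{4\cdot 2^3(2^3-2^2)+1}-(2\cdot 2^3-2\cdot 2^2+1)=\sqrt{129}-9,
\]
so $1<\omega<2$ and $\lfloor\omega\rfloor=1$. Substituting into Theorem~\ref{DF} gives $\mu_2(n,3)\le\frac{2^n-2^5}{7}+2^2-1-1=\frac{2^n-2^5}{7}+2$, which is exactly the claimed value. Thus the whole problem reduces to \emph{constructing}, for every $n\ge 8$ with $n\md{3}=2$, a partial $2$-spread of $\pg(n-1,2)$ of size $\frac{2^n-2^5}{7}+2$; since each plane covers $\ta_3=7$ points and $\ta_n-7(\frac{2^n-2^5}{7}+2)=17$, this is the same as leaving exactly $17$ holes (uncovered points). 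Note that this must beat the Beutelspacher value of Theorem~\ref{BHP} by one unit.

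The second step is a dimension-raising induction that increases $n$ by $3$ while preserving the number of holes. Fix a $3$-dimensional $\F_2$-subspace $W$ of $\F_{2^n}$ and form the external direct sum $\F_2^{n+3}=\F_{2^n}\oplus W$, writing elements as pairs $(v,w)$ with $v\in\F_{2^n}$, $w\in W$; products $aw$ are taken in $\F_{2^n}$. For each $a\in\F_{2^n}$ set $P_a=\{(aw,w):w\in W\}$, a $3$-dimensional subspace. If $a\neq b$ then $(a-b)w=0$ forces $w=0$, so the $2^n$ subspaces $P_a$ are pairwise disjoint and each meets $\F_{2^n}\oplus 0$ only in the zero vector; moreover every vector $(v,w)$ with $w\neq 0$ lies in the unique $P_a$ with $a=vw^{-1}$. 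Hence the $P_a$ form a partial $2$-spread that exactly partitions the complement of $\F_{2^n}\oplus 0$. Overlaying this with any partial $2$-spread of $\F_{2^n}$ having $c$ planes and $h$ holes gives a partial $2$-spread of $\F_2^{n+3}$ with $c+2^n$ planes and the same $h$ holes. Since $\frac{2^{n+3}-2^5}{7}=\frac{2^n-2^5}{7}+2^n$, this step carries a size-$(\frac{2^n-2^5}{7}+2)$ example with $17$ holes to a size-$(\frac{2^{n+3}-2^5}{7}+2)$ example with $17$ holes. (This is the classical Andr\'e--Beutelspacher construction.)

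Everything therefore rests on the base case $n=8$: a partial $2$-spread of $\pg(7,2)$ of size $34$, leaving $17$ holes. This is the main obstacle, and it is genuinely special to $n=8$. The only admissible smaller base under the step-by-$3$ induction is $n=5$ (the next, $n=2$, admits no planes at all), and there any two $3$-dimensional subspaces of $\F_2^5$ meet, since $3+3>5$; so at most one plane fits, it leaves $24$ holes, and the induction above only reproduces the weaker value $\frac{2^n-2^5}{7}+1$. Consequently the $34$-plane configuration \emph{cannot} be lifted from a lower dimension and must be built directly in $\F_2^8$. I would construct it explicitly and certify pairwise disjointness by a finite check — for instance, by starting from a $33$-plane configuration and performing a local exchange that reorganizes a bounded cluster of holes so as to free a $34$th plane, or by using the identification $\F_{2^8}\cong\F_{16}^2$ to align the $17$-point hole set with the $17$ points of a $\pg(1,16)$ subgeometry. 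Producing and verifying this single configuration is the crux; once it is in hand, the upper bound from Theorem~\ref{DF} together with the field-multiplication induction completes the proof for all $n\ge 8$ with $n\md{3}=2$.
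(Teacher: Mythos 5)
Your framing is right, and two of your three ingredients are sound. The Drake--Freeman computation checks out: with $q=2$, $t=3$, $r=2$ one gets $2\omega=\sqrt{129}-9$, so $\lfloor\omega\rfloor=1$ and Theorem~\ref{DF} yields exactly $\mu_2(n,3)\le\frac{2^n-2^5}{7}+2$. The lifting step is the classical Andr\'e/Beutelspacher construction and is correctly executed: the $2^n$ subspaces $P_a=\{(aw,w):w\in W\}$ partition the complement of $\F_{2^n}\oplus 0$, so the hole count is preserved as $n$ increases by $3$, and the size bookkeeping $\frac{2^{n+3}-2^5}{7}=\frac{2^n-2^5}{7}+2^n$ is correct. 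You are also right that the induction cannot bottom out below $n=8$. Note that the paper itself offers no proof of this statement --- it quotes it from El-Zanati--Jordon--Seelinger--Sissokho--Spence~\cite{EJSSS} --- and the proof in that source has precisely the architecture you describe: the Drake--Freeman upper bound plus a construction lifted through dimensions.

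The genuine gap is that you never produce the base case, and the base case \emph{is} the theorem. Everything else in your outline (the upper bound, the lifting step) was already known before \cite{EJSSS}; the sole new content of the result is the existence of $34$ pairwise disjoint planes in $V(8,2)$, beating the conjectured value $\frac{2^8-2^5}{7}+1=33$, and for this you offer only two unsubstantiated strategies. The ``local exchange'' idea fails as stated: the Beutelspacher-type $33$-plane partial spreads are maximal, and you give no argument that \emph{any} $33$-plane configuration admits a bounded modification freeing a $34$th plane --- indeed, were such a generic exchange available, it would have to stop working at $34$ (by the Drake--Freeman bound), so its existence is exactly as delicate as the object itself. The $\pg(1,16)$ suggestion is not even well-posed: under $\F_2^8\cong\F_{16}^2$, the $17$ points of $\pg(1,16)$ correspond to $17$ solids ($4$-dimensional $\F_2$-subspaces) forming a spread of $\pg(7,2)$, not to $17$ points, so ``aligning the hole set'' with them requires an additional, unspecified selection and compatibility argument. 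In \cite{EJSSS} the $34$-plane configuration is constructed explicitly and verified; asserting that one ``would construct it and certify disjointness by a finite check'' defers rather than supplies the proof, so as written the argument establishes only $\frac{2^n-2^5}{7}+1\le\mu_2(n,3)\le\frac{2^n-2^5}{7}+2$.
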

Recently, Kurz~\cite{K} proved the following theorem which upholds the lower bound for $\mu_q(n,t)$ when $q=2$, $r=2$, and $t>3$. 
\begin{theorem}\label{K} If $n>t>3$ and $n\md{t}=2$, then
$\mu_2(n,t)=\frac{2^n-2^{t+2}}{2^t-1}+1$.
\end{theorem}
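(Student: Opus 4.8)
The lower bound is exactly Theorem~\ref{BHP} with $q=2$ and $r=2$, so the whole task is to prove the matching upper bound $\mu_2(n,t)\le\frac{2^n-2^{t+2}}{2^t-1}+1$. Fix a partial $(t-1)$-spread $S$ and let $H$ be its set of \emph{holes}, i.e.\ the points of $\pg(n-1,2)$ covered by no member of $S$; put $N=|H|$. Since the members of $S$ are pairwise disjoint $(t-1)$-flats, each covering $\ta_t=2^t-1$ points, counting points gives $|S|(2^t-1)+N=2^n-1$, so bounding $|S|$ from above is the same as bounding $N$ from below. A direct computation shows that the target bound is equivalent to $N\ge 3\cdot 2^t$. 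Moreover, because $n\equiv 2\pmod t$ we have $2^n\equiv 4\pmod{2^t-1}$, whence $N\equiv 3\pmod{2^t-1}$; the admissible values of $N$ strictly below the target are therefore only $3$, $2^t+2$, and $2^{t+1}+1$.

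First I would invoke Theorem~\ref{DF}. For $q=2$, $r=2$ and $t\ge 4$ one checks that $4q^t(q^t-q^r)+1=(2^{t+1}-4)^2-15$, so that $1\le\omega<\tfrac32$ and $\lfloor\omega\rfloor=1$; Drake--Freeman then gives $|S|\le\frac{2^n-2^{t+2}}{2^t-1}+2$, i.e.\ $N\ge 2^{t+1}+1$. Combined with the congruence $N\equiv3\pmod{2^t-1}$, this leaves a single exceptional value to eliminate: it suffices to prove that no partial spread has exactly $N=2^{t+1}+1$ holes, for then $N\ge 3\cdot 2^t$ and we are done.

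The structural input is that $H$ is a $2^{t-1}$-divisible set of points. Indeed, for any $U\in S$ and any hyperplane $\pi$ one has $|U\cap\pi|\in\{\ta_{t-1},\ta_t\}$, and both values are $\equiv-1\pmod{2^{t-1}}$; summing over the disjoint members of $S$ and subtracting from $|\pi|=\ta_{n-1}\equiv-1\pmod{2^{t-1}}$ yields $|H\cap\pi|\equiv|S|-1\equiv N\pmod{2^{t-1}}$, a constant independent of $\pi$. Equivalently, writing $\widehat H(a)=\sum_{x\in H}(-1)^{a\cdot x}$ for the Walsh transform of the characteristic vector of $H$ in $\F_2^n$, the identity $|H\cap\pi_a|=\tfrac12\bigl(N+\widehat H(a)\bigr)$ over the hyperplane $\pi_a=\{x\ne 0:a\cdot x=0\}$ translates $2^{t-1}$-divisibility into $\widehat H(a)\equiv N\pmod{2^t}$ for every $a\ne 0$, while $\widehat H(0)=N$, $\sum_a\widehat H(a)=0$, and Parseval gives $\sum_a\widehat H(a)^2=2^nN$.

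The heart of the argument, and what I expect to be the main obstacle, is to rule out $N=2^{t+1}+1$ from this data. Using that restriction to a hyperplane containing $H$ preserves $2^{t-1}$-divisibility, one reduces to the case where $H$ spans its ambient space $\pg(v-1,2)$; then no $\widehat H(a)$ with $a\ne0$ equals $N$, so each such $\widehat H(a)$ lies in $\{1-2^{t+1},\,1-2^t,\,1,\,1+2^t\}$. The moment identities above pin down the first two moments of these four values and, through integrality of the associated hyperplane multiplicities, force a lower bound $v\ge 2t$ on the dimension; but the bounded support together with the Parseval relation still admits the prescribed moments, so the moment method alone does not close the gap. The clean way to finish is to recognise $H$ as a projective $2^{t-1}$-divisible binary code of length $2^{t+1}+1$ and to appeal to the classification of realizable lengths of $q^{t-1}$-divisible codes, which shows that $2^{t+1}+1$ is not an attainable length; this contradiction eliminates the last case and completes the proof. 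Establishing (or citing) that non-existence is the genuinely hard step, since the elementary moment constraints are visibly insufficient on their own.
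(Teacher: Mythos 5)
First, a point of context: this paper contains no proof of Theorem~\ref{K} at all. It is quoted from Kurz~\cite{K}, and within the paper's framework it is also the special case $q=2$, $r=2$, $t>\ta_2=3$ of Theorem~\ref{NS}, which is likewise cited from the companion paper~\cite{NS}. So there is no internal proof to match your attempt against; the relevant comparison is with Kurz's published argument and with the hyperplane-averaging machinery this paper does develop. Your reduction is correct and, in fact, faithfully reconstructs Kurz's route: the lower bound is Theorem~\ref{BHP}; the point count $|S|(2^t-1)+N=2^n-1$ makes the upper bound equivalent to $N\geq 3\cdot 2^t$; the congruence $N\equiv 3\pmod{2^t-1}$ is right; your computation $4q^t(q^t-q^r)+1=(2^{t+1}-4)^2-15$ giving $\lfloor\omega\rfloor=1$ and hence $N\geq 2^{t+1}+1$ via Theorem~\ref{DF} checks out; and the $2^{t-1}$-divisibility of the hole set, including the Walsh-transform reformulation $\widehat H(a)\equiv N\pmod{2^t}$, is correctly derived.

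The genuine gap is exactly where you locate it yourself: the exclusion of $N=2^{t+1}+1$. Everything you prove is elementary bookkeeping around Drake--Freeman; the entire content of Kurz's theorem beyond Theorem~\ref{DF} is the non-existence of a projective $2^{t-1}$-divisible binary code of length $2^{t+1}+1$, and you neither prove it nor can the first- and second-moment constraints you set up do so (as you concede, the four admissible Walsh values $\{1-2^{t+1},\,1-2^t,\,1,\,1+2^t\}$ are compatible with the moment identities). Appealing to the classification of realizable lengths of $q^{r}$-divisible codes is not logically circular, since that classification has an independent proof, but in this context it amounts to citing the source of the theorem being proved: the needed non-existence statement was first established in~\cite{K} (see also~\cite{K2}) precisely to obtain Theorem~\ref{K}, and the full length classification postdates it. It is also worth noting that the elementary alternative suggested by this paper's own methods does not trivially close the gap either: Lemma~\ref{lem:main} requires $q\mid x$ with $q^2\nmid x$, and excluding one extra spread element here corresponds to $x=1$ with $q=2$, which the lemma cannot reach; the self-contained elimination in~\cite{NS} (proof of Theorem~\ref{NS}) instead runs a hyperplane-averaging descent, as in Lemma~\ref{lem:main}, down to a small residual subspace partition and derives the contradiction from Heden's tail theorem (Theorem~\ref{He-T}) rather than from divisible-code classifications. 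So your outline is a correct skeleton of the known proof, but as a standalone argument the decisive step is missing.
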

For any integer $i\geq1$, let  
\begin{equation}
\ta_i=(q^i-1)/(q-1).
\end{equation}

Still recently, the authors of this paper affirmed the conjecture 
(e.g., see \cite{EiSt,HP}) on the value of $\mu_q(n,t)$ for $t>\ta_r$ and any prime power $q$, by proving the following general result~(see \cite{NS}).
\begin{theorem}\label{NS}
If $t>\ta_r$, then 
$\mu_q(n,t)=\frac{q^n-q^{t+r}}{q^t-1}+1$.
\end{theorem}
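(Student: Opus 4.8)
The lower bound $\mu_q(n,t)\ge (q^n-q^{t+r})/(q^t-1)+1$ is exactly Theorem~\ref{BHP}, so the entire content is the matching upper bound under the hypothesis $t>\ta_r$. The plan is to pass to the complementary \emph{hole} set and show it cannot be too small. Fix a partial $(t-1)$-spread $S$ of $\pg(n-1,q)$ of maximum size $N=\mu_q(n,t)$, viewed in $V=\F_q^n$ as a family of $N$ pairwise-trivially-intersecting $t$-dimensional subspaces, and let $H$ be the set of points covered by no member of $S$. Counting points gives $|H|=\ta_n-N\ta_t$, so the desired bound is equivalent to
\begin{equation}
|H|\ \ge\ q^t\ta_r\ =\ \ta_{t+r}-\ta_t .
\end{equation}
The right-hand side is the number of holes in the Andr\'e--Segre--Beutelspacher construction, whose hole set has cardinality $\ta_{t+r}-\ta_t$, realized as a $(t+r)$-space with one $t$-space deleted; the goal is to show that no maximal configuration beats it.

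Two congruences drive everything. First, writing $n=at+r$ and grouping the geometric series for $\ta_n$ into blocks of length $t$ yields $\ta_n\equiv\ta_r\pmod{\ta_t}$, whence
\begin{equation}
|H|\equiv \ta_r \pmod{\ta_t}.
\end{equation}
Thus $|H|=\ta_r+s\,\ta_t$ for some integer $s\ge0$, and since $q^t\ta_r=\ta_r+(q-1)\ta_r\cdot\ta_t$, the target reduces to proving $s\ge(q-1)\ta_r$. Second, fix any hyperplane $\Pi$. Each $U\in S$ satisfies either $U\subseteq\Pi$ or $\dim(U\cap\Pi)=t-1$; letting $y_\Pi$ count the members of $S$ not contained in $\Pi$, a short count of the covered points of $\Pi$ gives
\begin{equation}
|H\cap\Pi|\ =\ |H|-q^{n-1}+y_\Pi\, q^{t-1},\qquad y_\Pi\in\{0,1,\dots,N\}.
\end{equation}
Since $n-1\ge t-1$, this forces the uniform congruence $|H\cap\Pi|\equiv|H|\pmod{q^{t-1}}$ for every hyperplane $\Pi$.

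The heart of the proof is to combine these congruences with a count over the $\ta_n$ hyperplanes. The standard incidences $\sum_\Pi|H\cap\Pi|=|H|\,\ta_{n-1}$ and $\sum_\Pi\binom{|H\cap\Pi|}{2}=\binom{|H|}{2}\ta_{n-2}$, together with $(3)$, determine $\sum_\Pi y_\Pi$ and $\sum_\Pi y_\Pi^2$ exactly. A second-moment (convexity) argument, using that every $|H\cap\Pi|$ lies in a single residue class modulo $q^{t-1}$, then forces the $y_\Pi$ to cluster on at most two consecutive admissible values and yields a lower bound on $|H|$; carried out crudely this already recovers the Drake--Freeman bound of Theorem~\ref{DF}. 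To upgrade it to the exact value I would sharpen the estimate using $t>\ta_r$: this hypothesis gives $\ta_r<t\le q^{t-1}$, so $\ta_r<q^{t-1}$, and the resulting smallness of $\ta_r$ relative to the modulus $q^{t-1}$ constrains the possible sizes $|H\cap\Pi|$ so severely that no value $|H|=\ta_r+s\,\ta_t$ with $s<(q-1)\ta_r$ is consistent with a nonnegative integer family $\{y_\Pi\}$ realizing the prescribed moments. I expect this to be organized as an induction on $n$ (base case $n=t+r$, where any two $t$-spaces of $\F_q^{t+r}$ meet since $2t-(t+r)=t-r\ge1$, so $\mu_q(t+r,t)=1$ as required; the case $r=0$ is the classical spread theorem), feeding in the restriction bound $y_\Pi\ge N-\mu_q(n-1,t)$ to propagate the extremal value.

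The main obstacle is precisely this last sharpening: the raw moment/convexity estimate is not tight, and one must rule out every value of $|H|$ strictly between $\ta_r$ and $q^t\ta_r$. This is exactly where $t>\ta_r$ is essential — it is the threshold that makes the extremal hole set rigid (a $(t+r)$-space minus a $t$-space) and eliminates the intermediate configurations. Managing the interaction between the two moduli $\ta_t$ and $q^{t-1}$ in the hole count, and closing the induction without losing the additive constant $+1$, is the delicate part of the argument.
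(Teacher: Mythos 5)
Your setup is sound as far as it goes: the hole-set reformulation, the congruence $|H|\equiv\ta_r\pmod{\ta_t}$, the hyperplane identity $|H\cap\Pi|=|H|-q^{n-1}+y_\Pi q^{t-1}$, and the two moment identities are all correct, and the target $|H|\geq q^t\ta_r$ is the right equivalent form. But this is exactly the Drake--Freeman machinery, and, as you concede, it only recovers Theorem~\ref{DF}. The load-bearing step --- sharpening the convexity estimate to rule out every $|H|=\ta_r+s\,\ta_t$ with $s<(q-1)\ta_r$ --- is never supplied, and the specific way you propose to invoke $t>\ta_r$ cannot work: the only consequence you extract from it is $\ta_r<q^{t-1}$. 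That inequality also holds for $q=2$, $t=3$, $r=2$ (where $\ta_r=3<4=q^{t-1}$ but $t=\ta_r$), a case in which the claimed conclusion is \emph{false} by Theorem~\ref{EJSSS}: $\mu_2(n,3)$ exceeds the Beutelspacher value by one. Since every ingredient you list (congruences, moment identities, the restriction bound $y_\Pi\geq N-\mu_q(n-1,t)$) holds verbatim in that case too, any argument built solely from them plus $\ta_r<q^{t-1}$ would prove a false statement. This is the concrete reason the second-moment method saturates at Theorem~\ref{DF}: nonnegative integer families $\{y_\Pi\}$ realizing the prescribed moments do exist for intermediate hole sizes, so counting alone cannot eliminate them.

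What closes the argument in \cite{NS} --- note that the present paper only quotes Theorem~\ref{NS}; its own Lemma~\ref{lem:main} runs the same engine for the harder range $t\leq\ta_r$ --- is genuinely different: an \emph{iterated} first-moment descent combined with a structural theorem. One chooses a hyperplane in which the number of holes is at most the average, passes to it, and repeats $t-2$ times, tracking the hole count modulo $q^{t-j}$ through the quantities $\de_i$ of Lemma~\ref{lem:de}; the hypothesis $t>\ta_r$ is used at full strength to force the coefficient $c_j$ in $m_{j,1}=c_jq^{t-j}+\de_{t-j}$ down to $0$ before the descent ends, leaving a subspace partition of $V(n-t+2,q)$ whose tail of $1$-subspaces is nonzero yet smaller than $q^2$, with controlled divisibility. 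Heden's tail theorem (Theorem~\ref{He-T}) then declares such a short tail impossible, yielding the contradiction. Your sketch has no counterpart to this structural input, nor to the nested descent that replaces one global count by $t-2$ local ones; moreover, your proposed induction on $n$ shifts the residue from $r$ to $r-1$ at each step, and once it wraps past $r=0$ it would require $\mu_q$ at residue $t-1$, where $t>\ta_{t-1}$ fails and no exact value is available. The gap is therefore essential, not a matter of missing details.
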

In light of Theorem~\ref{NS}, it remains to determine 
the value of $\mu_q(n,t)$ for $2\leq r<t\leq \ta_r$. 
In this paper, we apply the {\em hyperplane averaging method} that we 
devised in~\cite{NS} to prove the following results\footnote{
Also see~\cite{K2} for a recent preprint in this area.}. The rest of the paper is devoted to 
their proofs.
\begin{theorem}\label{thm:mq} 
 Let $c_1\equiv(t-2)\pmod{q}$, $0\leq c_1<q$, and $c_2=\begin{cases} 
q & \mbox{ if }  q^2\mid \left((q-1)(t-2)+c_1\right)\\ 
0& \mbox{ if }  q^2\nmid \left((q-1)(t-2)+c_1\right).
\end{cases}$\\
If $2\leq r<t\leq \ta_r$, then
\[\mu_q(n,t)\leq \frac{q^n-q^{t+r}}{q^t-1}+q^r-(q-1)(t-2)-c_1+c_2.
\]
Consequently,
\[\mu_q(n,t)\leq \frac{q^n-q^{t+r}}{q^t-1}+q^r-(q-1)(t-3)+1.
\]
\end{theorem}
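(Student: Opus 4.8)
The plan is to translate the asserted inequality into a lower bound on the number of uncovered points (``holes'') of a maximum partial spread, and then to force those holes into existence by the hyperplane-averaging method of~\cite{NS}, tracking divisibility by $q$ and $q^2$ in order to recover the constants $c_1,c_2$.

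First I would fix a maximum partial $(t-1)$-spread $S$ of $\pg(n-1,q)$, put $y=|S|=\mu_q(n,t)$, and count holes: their number is $h=\ta_n-y\,\ta_t$. Because $n\equiv r\pmod t$ one has $\ta_n\equiv\ta_r\pmod{\ta_t}$, so $h=\sigma\ta_t+\ta_r$ for some integer $\sigma\ge0$. A direct substitution, using $(q-1)\ta_r=q^r-1$, shows that the first displayed inequality of Theorem~\ref{thm:mq} is \emph{equivalent} to
\[
\sigma\ \ge\ (q-1)(t-2)+c_1-c_2 .
\]
This is the reformulation that makes the constants legible: $(q-1)(t-2)+c_1$ is exactly the least multiple of $q$ that is $\ge(q-1)(t-2)$, and $c_2$ removes one further $q$ precisely when that multiple is divisible by $q^2$. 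Thus the whole theorem reduces to a single assertion — that the number of ``extra'' holes $\sigma$ cannot fall below an explicit, $n$-independent threshold — and that threshold is dictated by divisibility mod $q$ and mod $q^2$.

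Then I would deploy the averaging. For a hyperplane $H$ of $\pg(n-1,q)$ let $s_H$ be the number of members of $S$ contained in $H$; the remaining $y-s_H$ members meet $H$ in a $(t-2)$-dimensional space, so $S$ induces on $H\cong\pg(n-2,q)$ a subspace partition into $s_H$ spaces of dimension $t-1$, $y-s_H$ spaces of dimension $t-2$, and $h_H$ holes, whence, using $\ta_t-\ta_{t-1}=q^{t-1}$,
\[
h_H=\ta_{n-1}-y\,\ta_{t-1}-s_H\,q^{t-1}.
\]
Double counting gives $\sum_H s_H=y\,\ta_{n-t}$ and $\sum_H h_H=h\,\ta_{n-1}$, so the average of $h_H$ over hyperplanes is $h\,\ta_{n-1}/\ta_n$; choosing a hyperplane $H^\ast$ that minimizes $h_H$ (equivalently maximizes $s_H$) produces one whose hole count is at most this average and whose induced partition is as full as possible. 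The key point is that the $(t-1)$-spaces inside $H^\ast$ again form a partial $(t-1)$-spread, which couples $\mu_q(n,t)$ to the same problem in one lower dimension; iterating this and using Theorem~\ref{BHP} (for residues $r'\le1$) and Theorem~\ref{NS} (as soon as the residue $r'$ satisfies $t>\ta_{r'}$) as base cases, the linear term $(q-1)(t-2)$ should emerge from the forced holes in the extremal hyperplane fed back through the displayed identity.

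The main obstacle is precisely this last, quantitative step. Bounding $\min_H h_H$ crudely by the average and iterating loses additive constants through the unavoidable floors and ceilings, so it cannot see the sharp coefficient $(q-1)$, let alone $c_1$ and $c_2$. The delicate work is a count of $h_{H^\ast}$ modulo $q^2$: one must show that the induced hole number is confined to a residue class that makes $(q-1)(t-2)$ round \emph{up} to the next multiple of $q$ (this is the $+c_1$), while a full $q$ can be reclaimed exactly when the rounded value is itself divisible by $q^2$ (this is the $-c_2$). Carrying these congruences through the reduction without degrading the linear coefficient is the crux of the argument. Once the first bound is established, the second follows at once, since $-c_1+c_2\le q$ gives $q^r-(q-1)(t-2)-c_1+c_2\le q^r-(q-1)(t-2)+q=q^r-(q-1)(t-3)+1$.
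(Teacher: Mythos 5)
Your opening reformulation is sound and in fact matches the paper's setup: writing the hole count as $h=\sigma\ta_t+\ta_r$ and showing the first display is equivalent to $\sigma\ge(q-1)(t-2)+c_1-c_2$ is the same as the paper's choice of $x:=q^r-(q-1)(t-2)-c_1+c_2$ followed by a proof by contradiction against a spread of size $\ell q^t+1+x$; your observation that $(q-1)(t-2)+c_1$ is the least multiple of $q$ above $(q-1)(t-2)$, with $c_2$ correcting for divisibility by $q^2$, is exactly the verification that $q\mid x$ and $q^2\nmid x$, and your deduction of the second display from $-c_1+c_2\le q$ is the paper's. But there is a genuine gap at the step you yourself label the crux, and it is not a mere technicality: you never supply the mechanism that turns the averaging into the sharp bound. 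The paper (Lemma~\ref{lem:main}) resolves it with two ingredients absent from your sketch. First, the residue of the hole count is not something to be ``confined'' by delicate estimation: it is pinned \emph{exactly}. Setting $\de_i=q^i\lceil xq^{-i}\ta_i\rceil-x\ta_i$, Lemma~\ref{lem:de}(ii) gives the recursion $\de_i\equiv q^{-1}(x+\de_{i+1})\pmod{q^i}$, and combining this with the hyperplane identity of Lemma~\ref{lem:HeLe0} forces $b_{H,1}\equiv\de_{t-j-1}\pmod{q^{t-j-1}}$ for \emph{every} hyperplane $H$ at level $j$. Consequently $m_{j,1}=c_jq^{t-j}+\de_{t-j}$, and the averaging (Lemma~\ref{lem:HeLe1}) forces only the integer coefficient $c_j$ to drop by at least one per step; since the residue is exact, no additive constants are lost to floors and ceilings, which disposes of the obstacle you flagged. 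The hypothesis $t\ge\ta_r-\lceil x/(q-1)\rceil+2$ (which your $x$ satisfies because $t\le\ta_r$) guarantees $c_{t-2}=0$ after $t-2$ steps.

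Second, and decisively, the final contradiction does not come from averaging at all but from Heden's tail theorem (Theorem~\ref{He-T}), which you never invoke and for which you propose no substitute: at level $j=t-2$ one has a subspace partition of $V(n-t+2,q)$ with exactly $m_{t-2,1}=\de_2$ holes, where $0<\de_2<q^2$ and $q\mid\de_2$ --- this is precisely why $x$ was engineered, via $c_1$ and $c_2$, to satisfy $q\mid x$ and $q^2\nmid x$, since Lemma~\ref{lem:de}(iii) then yields $\de_2\ne0$. Theorem~\ref{He-T}(ii),(iv) forces any such tail to have length at least $q^2$ (or equal to $(q^s-1)/(q-1)>q^2$), a contradiction. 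Your fallback route --- extracting the partial $(t-1)$-spread inside the extremal hyperplane and iterating down to base cases covered by Theorems~\ref{BHP} and~\ref{NS} --- would not work: after one step the induced structure is a mixed partition with members of dimensions $t-1$ and $t-2$, so discarding the $(t-2)$-spaces and coupling to $\mu_q(n-1,t)$ throws away the constraint being exploited, and those theorems return only the generic bound with ``$+1$'', from which the linear term $(q-1)(t-2)$ cannot be recovered by the averaging you describe --- as your penultimate paragraph in effect concedes. In short: right strategy and correct bookkeeping at the outer layer, but the inductive congruence-tracking and the tail-theorem contradiction that constitute the actual proof are missing.
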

\begin{remark}
The best possible bound in Theorem~\ref{thm:mq} is obtained when 
$t\equiv aq+1\pmod{q^2}$, $1\leq a\leq q-1$ (equivalently, when $t\equiv 1\pmod{q}$ but $t\not\equiv 1\pmod{q^2}$).
In this case, we can check that $c_1=q-1$ and $c_2=0$, which implies that 
\[\mu_q(n,t)\leq \frac{q^n-q^{t+r}}{q^t-1}+q^r-(q-1)(t-1).
\]
This was already noted in~\cite[Lemma~$10$ and Remark~$11$]{NS} for 
$r\geq2$ and $t=\ta_r=(q^r-1)/(q-1)$. 
\end{remark}
\begin{cor}\label{cor:M} \ 
Let $f_q(n,t)$ denote the upper bound for $\mu_q(n,t)$
in Theorem~\ref{DF} and let $g_q(n,t)$denote the upper bound for $\mu_q(n,t)$
in Theorem~\ref{thm:mq}. Let $c_1$ and $c_2$ be as defined in 
Theorem~\ref{thm:mq}.
If $r\geq2$ and $2r \leq t\leq \ta_r$ then
\[g_q(n,t)-f_q(n,p)= \left\lfloor\frac{q^r}{2}\right\rfloor-(q-1)(t-2)-c_1+c_2.\]
Consequently, for $\lceil\frac{\ta_r}{2}\rceil+4\leq t\leq \ta_r$ with $q>2$, and 
for $\lceil\frac{\ta_r}{2}\rceil+5\leq t\leq \ta_r$ with $q=2$, we have
\[ 
g_q(n,t)-f_q(n,p)<0,\]
and thus the upper bound for $\mu_q(n,t)$ given in Theorem~\ref{thm:mq} is tighter than the Drake--Freeman bound in Theorem~\ref{DF}.
\end{cor}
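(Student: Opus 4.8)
The plan is to compute the difference $g_q(n,t)-f_q(n,t)$ directly from the two closed forms. Subtracting the Drake--Freeman expression in Theorem~\ref{DF} from the bound in Theorem~\ref{thm:mq}, the common term $(q^n-q^{t+r})/(q^t-1)+q^r$ cancels, leaving
\[
g_q(n,t)-f_q(n,t)=\lfloor\omega\rfloor+1-(q-1)(t-2)-c_1+c_2,
\]
where $2\omega=\sqrt{4q^t(q^t-q^r)+1}-(2q^t-2q^r+1)$. Thus the first claim reduces to the single arithmetic identity $\lfloor\omega\rfloor=\lfloor q^r/2\rfloor-1$, which is exactly where the hypothesis $2r\le t$ will be used.

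To establish this identity I would first complete the square inside the radical: setting $X=4q^t(q^t-q^r)+1$, one checks $X=(2q^t-q^r)^2-(q^{2r}-1)$, so $\sqrt X=(2q^t-q^r)-\delta$ with $\delta=\frac{q^{2r}-1}{\sqrt X+2q^t-q^r}>0$. Substituting back gives the clean expression $2\omega=q^r-1-\delta$, hence $\omega=\tfrac{q^r-1}{2}-\tfrac{\delta}{2}$. The key estimate is $0<\delta<\tfrac12$: using $2r\le t$ (so $q^{2r}\le q^t$) one shows first that $\sqrt X>2q^t-q^r-1$, whence the denominator exceeds $4q^t-2q^r-1$ and $\delta<\frac{q^t-1}{4q^t-2q^r-1}<\tfrac12$. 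Taking floors then splits into two parity cases according to whether $q^r$ is odd ($q$ odd) or even ($q=2$), and in both cases $\lfloor\omega\rfloor=\lfloor q^r/2\rfloor-1$ falls out, proving the displayed equality.

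For the consequence I would discard the refined correction terms by bounding $c_2-c_1\le q$ (since $0\le c_1$ and $c_2\le q$), which turns the equality into $g_q(n,t)-f_q(n,t)\le\lfloor q^r/2\rfloor-(q-1)(t-3)+1$; this is precisely the simpler form recorded at the end of Theorem~\ref{thm:mq}. It then remains to show the right-hand side is negative in the stated ranges. Writing $(q-1)\ta_r=q^r-1$ and using $t-3\ge\lceil\ta_r/2\rceil+1$ (respectively $+2$ when $q=2$), I would lower-bound $(q-1)(t-3)$ by $\tfrac{q^r-1}{2}+(q-1)$ and compare with $\lfloor q^r/2\rfloor+1$; the inequality reduces to $2q>5$ for $q>2$, and is checked directly for $q=2$ using $\lceil\ta_r/2\rceil=2^{r-1}=\lfloor q^r/2\rfloor$, which explains the one-unit larger threshold in the binary case.

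The main obstacle is the middle step: pinning down the fractional part of $\omega$ precisely enough to evaluate $\lfloor\omega\rfloor$ exactly. Everything hinges on showing $\delta$ lies strictly between $0$ and $\tfrac12$ --- the upper bound $\delta<\tfrac12$ is exactly what the hypothesis $2r\le t$ buys, and without it the floor computation (and hence the clean formula) fails. The two endgame inequalities for $q>2$ and $q=2$ are then routine, modulo verifying that the threshold $t\ge\lceil\ta_r/2\rceil+4$ indeed lies within the range $2r\le t\le\ta_r$ where the first part applies.
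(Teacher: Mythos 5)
Your proposal is correct, and it follows the same overall skeleton as the paper's proof: cancel the common term $(q^n-q^{t+r})/(q^t-1)+q^r$, reduce the displayed equality to the single identity $\lfloor\omega\rfloor=\lfloor q^r/2\rfloor-1$ valid for $t\geq 2r$, then bound $-c_1+c_2\leq q$ and substitute the threshold on $t$; your endgame arithmetic reproduces the paper's computation exactly (the same $5/2-q<0$ for $q>2$, with the $q=2$ case handled by the shifted threshold $t\geq\lceil\ta_r/2\rceil+5$). The one genuine divergence is the middle step: the paper does not prove the floor identity at all, citing it as \cite[Lemma~2]{K} with the remark that it is ``straightforward to show,'' whereas you supply a self-contained derivation via the completed square $4q^t(q^t-q^r)+1=(2q^t-q^r)^2-(q^{2r}-1)$, the representation $2\omega=q^r-1-\delta$ with $\delta=(q^{2r}-1)/(\sqrt{X}+2q^t-q^r)$, and the estimate $0<\delta<\tfrac12$ using $q^{2r}\leq q^t$. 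This buys something the paper leaves implicit: it isolates exactly where the hypothesis $2r\leq t$ enters (for $t<2r$ and $q$ large one gets $\delta>1$ and the identity genuinely fails, e.g.\ $t=2r-1$, $q\geq5$ gives $\delta\approx q/4$). Two small points: your parity split should be labeled ``$q^r$ even, i.e.\ $q$ even'' rather than ``$q=2$'' --- even prime powers $q=4,8,\dots$ fall in the same case, and your computation covers them verbatim since it only uses the parity of $q^r$; and in fact $\delta<1$ already suffices for both parity cases, so your bound $\delta<\tfrac12$ has slack. Finally, the compatibility check you flag does hold: $\lceil\ta_r/2\rceil+4\geq 2r$ for all $r\geq2$ and every prime power $q$ (since $\lceil\ta_r/2\rceil\geq 2^{r-1}$), so the stated range lies inside the regime where the first part applies --- a verification the paper also omits.
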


\bs In Section~\ref{sec:2}, we present some auxiliary results from the area of subspace partitions, and in Section~\ref{sec:3} we prove Theorem~\ref{thm:mq} and Corollary~\ref{cor:M}.
\section{Subspace partitions}\label{sec:2}
Let $V=V(n,q)$ denote the vector space of dimension $n$ over $\F_q$.
For any subspace $U$ of $V$, let $U^*$ denote the set of nonzero vectors in $U$.  A  {\em $d$-subspace} of $V(n,q)$
is a $d$-dimensional subspace of $V(n,q)$; this is equivalent to a {\em $(d-1)$-subspace}  in $\pg(n-1,q)$.

A {\em subspace partition} $\Pa$ of $V$, also known as a {\em vector space partition}, 
is a collection of 
nontrivial subspaces of $V$ such that each vector of $V^*$ is in exactly one subspace of $\Pa$ (e.g., see Heden~\cite{He1} for 
a survey on subspace partitions). 
The {\em size} of a subspace partition $\Pa$, denoted by $|\Pa|$, is the number of subspaces in $\Pa$. 

Suppose that there are $s$ distinct integers, $d_s>\dots>d_1$, that occur as dimensions of subspaces
in a subspace partition $\Pa$, 
and let $n_i$ denote the number of $i$-subspaces in $\Pa$. Then the expression $[d_s^{n_{d_s}},\ldots,d_1^{n_{d_1}}]$ is called the {\em type} of $\Pa$.  
\begin{remark}\label{rmk:eq}
A partial $(t-1)$-spread of $\pg(n-1,q)$ of size $n_t$ is a partial $t$-spread of $V(n,q)$ of size $n_t$. This is 
equivalent to a subspace partition of $V(n,q)$ of type 
$[t^{n_t},1^{n_1}]$, where $n_1=\ta_n-n_t\ta_{t}$.
We will use this subspace partition formulation in the proof 
of Lemma~\ref{lem:main}.
\end{remark}

\bs Also, we will use the following theorem due to Heden~\cite{He-T}  in the proof of Lemma~\ref{lem:main}.
\begin{theorem}\label{He-T}\cite[Theorem~$1$]{He-T}
Let ${\mathcal P}$ be a subspace partition of $V(n,q)$ of type $[d_s^{n_{d_s}},\ldots,d_1^{n_{d_1}}]$, where $d_s>\ldots>d_1$. Then, 
\begin{enumerate}
\item[(i)] if $q^{d_2-d_1}$ does not divide $n_{d_1}$ and if $d_2<2d_1$, 
then $n_{d_1}\geq q^{d_1}+1$.
\item[(ii)] if $q^{d_2-d_1}$ does not divide $n_{d_1}$ and $d_2\geq 2d_1$, then either
 $n_{d_1}=(q^{d_2}-1)/(q^{d_1}-1)$ or $n_{d_1}>2q^{d_2-d_1}$.
\item[(iii)] if $q^{d_2-d_1}$ divides $n_{d_1}$ and $d_2<2d_1$, then 
 $n_{d_1}\geq q^{d_2}-q^{d_1}+q^{d_2-d_1}$.
\item[(iv)] if $q^{d_2-d_1}$ divides $n_{d_1}$ and $d_2\geq 2d_1$, then $n_{d_1}\geq q^{d_2}$.
\end{enumerate}
\end{theorem}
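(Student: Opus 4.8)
Throughout, write $a=d_1$ and $b=d_2$ for the two smallest dimensions occurring in $\Pa$, set $\delta=b-a>0$, let $N=n_{d_1}$ be the number of minimal subspaces, and let $\mathcal{A}=\{A_1,\dots,A_N\}$ denote the members of $\Pa$ of dimension $a$. The tool I would use throughout is that for every subspace $U\le V(n,q)$ the nonzero traces $\{B\cap U:\ B\in\Pa,\ B\cap U\neq 0\}$ again form a subspace partition of $U$, together with the point-count identity $\sum_{B\in\Pa}\ta_{\dim(B\cap U)}=\ta_{\dim U}$ (with $\ta_0=0$) that it produces. Since the four asserted bounds depend only on $a$, $b$, and $q$, the argument should be local, taking place inside low-dimensional sections rather than in all of $V$. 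My plan is to separate the two independent effects that produce the four cases: a packing effect governed by the comparison of $b$ with $2a$, and a divisibility effect governed by $q^{\delta}$.

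First I would settle the packing dichotomy, which is geometric. Because any two distinct members of $\Pa$ meet only in $0$, two minimal subspaces satisfy $\dim(A_i+A_j)=2a$. When $b\ge 2a$ this span fits inside a $b$-subspace, and I would argue that if $N$ is small then all of $\mathcal{A}$ is forced to lie in one common $b$-subspace $T$ and to partition it, i.e. $\mathcal{A}$ is a $d_1$-spread of $T$; applying the point-count identity with $U=T$ then pins the exact value $N=(q^{b}-1)/(q^{a}-1)$, while any configuration not of this form leaks covered points outside every such $T$ and yields the surplus $N>2q^{\delta}$ of case~(ii). When $b<2a$ the span of two minimal subspaces has dimension $2a>b$, so they cannot be absorbed into a single $b$-subspace; taking $U$ to be the span of one minimal subspace together with one further covered point and reading off the induced partition of $U$ then forces at least a full $d_1$-spread's worth of minimal subspaces through the points they cover, giving $N\ge q^{a}+1$ as in case~(i).

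The role of the divisibility hypothesis is to upgrade cases (i) and (ii) to (iii) and (iv), and here I would feed $q^{\delta}\mid N$ back into the packing analysis rather than try to read off a standalone congruence for $N$ (which the point counts do not isolate, since they mix $N$ with the number of non-minimal members). The $d_1$-spread value $(q^{b}-1)/(q^{a}-1)$ is not divisible by $q^{\delta}$ (it is $\equiv 1\pmod{q^{a}}$), so when $q^{\delta}\mid N$ the concentrated spread configuration is impossible and the minimal subspaces must already overflow a single $b$-subspace, pushing $N$ up to the next value compatible with the partition structure, namely $q^{b}$ in case~(iv). The same mechanism in the regime $b<2a$ replaces the bound $q^{a}+1$ of case~(i) by the smallest value divisible by $q^{\delta}$ consistent with the overspanning count, namely $q^{b}-q^{a}+q^{\delta}$, giving case~(iii). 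Thus the divisibility condition is exactly the arithmetic obstruction that separates the extremal, spread-like configurations from the generic ones.

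The hardest part will be the concentration step of the packing argument: proving that when $N$ is \emph{small} the minimal subspaces are forced to assemble into a single $d_1$-spread of one $b$-subspace instead of being scattered through $V$, and, hand in hand with this, ruling out every value of $N$ strictly between $(q^{b}-1)/(q^{a}-1)$ and $2q^{\delta}$ in case~(ii). This is the step where the comparison $b\gtrless 2a$ and the divisibility by $q^{\delta}$ genuinely interact and where a purely averaged estimate is too weak; it requires a careful analysis of how the $A_i$ meet the second-smallest members of $\Pa$. Once the concentration and the divisibility obstruction are in place, each of the four inequalities follows by assembling the relevant point counts through the induced-partition identity.
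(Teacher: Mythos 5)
First, a point of orientation: the paper does not prove this statement at all --- it is imported verbatim from Heden \cite[Theorem~1]{He-T} and used as a black box in the proof of Lemma~\ref{lem:main} --- so your attempt has to be measured against Heden's own argument, which is a global hyperplane-counting argument. Measured that way, your proposal has a genuine gap: every step that carries the content of the theorem is asserted rather than proved. The ``concentration step'' (that a small family $\mathcal{A}$ must assemble into a $d_1$-spread of a single $d_2$-subspace), the exclusion of all values of $N$ strictly between $(q^{d_2}-1)/(q^{d_1}-1)$ and $2q^{d_2-d_1}$ in case (ii), and the upgraded bounds $q^{d_2}$ and $q^{d_2}-q^{d_1}+q^{d_2-d_1}$ in cases (iv) and (iii) are exactly what the theorem claims, and your text handles them with phrases such as ``pushing $N$ up to the next value compatible with the partition structure,'' which presupposes the conclusion. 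You correctly flag the concentration step as the hardest part, but you supply no mechanism for it.

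The deeper problem is that the proposed toolkit cannot deliver those steps. The identity $\sum_{B\in\Pa}\ta_{\dim(B\cap U)}=\ta_{\dim U}$ applied to a low-dimensional section $U$ mixes tail members with traces of large members: once $\dim U$ is small, a trace $B\cap U$ of a member with $\dim B\geq d_2$ can have any dimension from $0$ up, so inside $U$ nothing distinguishes a tail member from the trace of a big member. Concretely, in your case (i) argument with $U=\langle A_i,P\rangle$, the points of $U$ outside $A_i$ may all be covered by traces of members of dimension at least $d_2$, and no lower bound on the number of \emph{tail} members can be read off such a section; symptomatically, that sketch also never invokes its hypothesis $q^{d_2-d_1}\nmid n_{d_1}$. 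The divisibility information survives only at hyperplanes: if $\dim B\geq d_2$ then $\dim(B\cap H)\geq d_2-1$ for every hyperplane $H$, so the point set covered by the tail meets every hyperplane in a number of points congruent to its total size modulo $q^{d_2-1}$, and the number of tail members contained in $H$ obeys congruences modulo powers of $q$ tied to $d_2-d_1$. Heden's proof extracts all four bounds from these global congruences --- the same genre of hyperplane identities (Lemma~\ref{lem:HeLe0} and Lemma~\ref{lem:HeLe1}) that the present paper averages over in its induction for Lemma~\ref{lem:main}. Your ``local packing'' plan discards precisely this information, so the two independent effects you want to separate (the $d_2\gtrless 2d_1$ packing dichotomy and the $q^{d_2-d_1}$ divisibility) never become provable statements in your framework.
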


\bs To state the next lemmas, we need the following definitions. Recall that for any integer 
$i\geq1$,
\[\ta_i=(q^i-1)/(q-1).\]

Then, for $i\geq 1$, $\ta_i$ is the number of $1$-subspaces in an $i$-subspace of $V(n,q)$. 
Let $\Pa$ be a subspace partition of $V=V(n,q)$ of type 
$[d_s^{n_{d_s}},\ldots,d_1^{n_{d_1}}]$.
For any hyperplane $H$ of $V$, let $b_{H,d}$ be the number of $d$-subspaces in $\Pa$ that are contained in $H$ and set 
$b_{H}=[b_{H,d_s},\ldots,b_{H,d_1}]$.  
Define the set $\Ba$ of {\em hyperplane types} as follows:
\[\Ba=\{b_H:\; \mbox{$H$ is a hyperplane of $V$}\}.\]
For any $b \in \Ba$, let $s_{b}$ denote the number of hyperplanes of $V$ of type $b$.

\bs
We will also use Lemma~\ref{lem:HeLe0} and Lemma~\ref{lem:HeLe1} 
by Heden and Lehmann~\cite{HeLe} in the proof 
of Lemma~\ref{lem:main}.
\begin{lemma}\label{lem:HeLe0}\cite[Equation~$(1)$]{HeLe}
Let $\Pa$ be a subspace partition of $V(n,q)$ of type 
$[d_s^{n_{d_s}},\ldots,d_1^{n_{d_1}}]$. If $H$ is a hyperplane 
of $V(n,q)$ and $b_{H,d}$ is as defined above, then
\[|\Pa|=1+\sum\limits_{i=1}^s b_{H,d_i}q^{d_i}.\]
\end{lemma}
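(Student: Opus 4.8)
The plan is to establish the identity by two independent counting arguments applied to the nonzero vectors of $V=V(n,q)$, and then to subtract them. The crucial elementary observation is a dichotomy for the members of $\Pa$ relative to the fixed hyperplane $H$: since $H$ has codimension one in $V$, for every subspace $W\in\Pa$ we have $W+H\in\{H,V\}$, so either $W\subseteq H$, or $W\not\subseteq H$ and then $W\cap H$ is a hyperplane of $W$, i.e. $\dim(W\cap H)=\dim W-1$. Consequently a $d$-subspace $W\in\Pa$ contains $q^{d}-1$ nonzero vectors in total, of which $q^{d}-q^{d-1}$ lie outside $H$ when $W\not\subseteq H$, and $0$ lie outside $H$ when $W\subseteq H$.

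First I would count all nonzero vectors of $V$. Because $\Pa$ partitions $V^{*}$, summing $q^{d_i}-1$ over the members of $\Pa$ gives $\sum_{i=1}^{s} n_{d_i}(q^{d_i}-1)=q^{n}-1$, which rearranges to
\[\sum_{i=1}^{s} n_{d_i}q^{d_i}=q^{n}-1+|\Pa|.\]
Next I would count only the vectors lying outside $H$. There are $q^{n}-q^{n-1}$ such vectors, and by the dichotomy above each is contained in a unique member of $\Pa$ that is not contained in $H$; the number of $d_i$-subspaces of $\Pa$ not contained in $H$ is $n_{d_i}-b_{H,d_i}$, each contributing $q^{d_i}-q^{d_i-1}$ vectors outside $H$. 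This gives $q^{n}-q^{n-1}=\sum_{i=1}^{s}(n_{d_i}-b_{H,d_i})(q^{d_i}-q^{d_i-1})$; dividing by $q-1$ and multiplying by $q$ yields
\[\sum_{i=1}^{s}(n_{d_i}-b_{H,d_i})q^{d_i}=q^{n}.\]

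Finally, subtracting this from the first identity eliminates both the $q^{n}$ term and the $\sum_{i} n_{d_i}q^{d_i}$ term, leaving $\sum_{i=1}^{s} b_{H,d_i}q^{d_i}=|\Pa|-1$, which is the claim. I do not expect a serious obstacle here; the only point requiring care is the geometric dichotomy ($\dim(W\cap H)=\dim W-1$ for $W\not\subseteq H$), which is exactly where the hypothesis that $H$ is a hyperplane, rather than an arbitrary subspace, is used, and which guarantees that every vector outside $H$ is accounted for by exactly one member of $\Pa$.
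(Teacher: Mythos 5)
Your proof is correct, and there is nothing in the paper to compare it against: the lemma is quoted from Heden and Lehmann (their Equation~(1)) and the paper supplies no proof of its own. Your two-count argument (all of $V^{*}$ versus the vectors outside $H$, using that any $W\in\Pa$ with $W\not\subseteq H$ satisfies $\dim(W\cap H)=\dim W-1$) is essentially the standard derivation in the cited source, up to the cosmetic difference that one may instead count the nonzero vectors of $H$ itself — the complementary count — and the subtraction step is then built in; both routes yield $\sum_{i} b_{H,d_i}q^{d_i}=|\Pa|-1$ identically.
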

\begin{lemma}\label{lem:HeLe1}\cite[Equation~$(2)$ and Corollary $5$]{HeLe}
Let $\Pa$ be a subspace partition of $V(n,q)$, and let $\Ba$ and $s_b$ be as defined above. Then 
\[\sum\limits_{b\in \Ba} s_b=\ta_n,\]
and for $1\leq d\leq n-1$, we have
\[\sum\limits_{b\in \Ba}b_ds_b=n_d \ta_{n-d}.\]
\end{lemma}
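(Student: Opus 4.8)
The plan is to establish both identities by double counting, using the collection of all hyperplanes of $V=V(n,q)$ as the ground set in each case. The key underlying fact, which I will use twice, is the standard count of hyperplanes (i.e. $(n-1)$-subspaces) of $V(n,q)$: by duality, or directly from the Gaussian binomial coefficient $\binom{n}{n-1}_q=\binom{n}{1}_q$, their number equals the number of $1$-subspaces, namely $\ta_n$.

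For the first identity I would observe that every hyperplane $H$ of $V$ has a uniquely determined type $b_H\in\Ba$, so the quantities $s_b$, as $b$ ranges over $\Ba$, partition the collection of all hyperplanes of $V$ into classes of equal type. Summing $s_b$ over $b\in\Ba$ therefore counts every hyperplane of $V$ exactly once, and hence equals the total number of hyperplanes, which is $\ta_n$ by the count just recalled. This yields $\sum_{b\in\Ba} s_b=\ta_n$.

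For the second identity, fix $d$ with $1\leq d\leq n-1$ and count in two ways the number $N$ of incident pairs $(W,H)$ in which $W$ is a $d$-subspace belonging to $\Pa$ and $H$ is a hyperplane of $V$ containing $W$. Counting first by hyperplanes: for a fixed $H$, the number of such $W$ is exactly $b_{H,d}$ by definition, so $N=\sum_H b_{H,d}$. Grouping the hyperplanes according to their type and using that $b_{H,d}=b_d$ whenever $b_H=b$, this rewrites as $N=\sum_{b\in\Ba} b_d\,s_b$, the desired left-hand side. Counting the same $N$ by $d$-subspaces: for each of the $n_d$ members $W$ of $\Pa$ of dimension $d$, the hyperplanes of $V$ containing $W$ correspond bijectively to the hyperplanes of the quotient space $V/W$, which has dimension $n-d$; applying the hyperplane count to $V(n-d,q)$ gives $\ta_{n-d}$ such hyperplanes (equivalently, $\binom{n-d}{n-1-d}_q=\binom{n-d}{1}_q=\ta_{n-d}$). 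Since this number is the same for every $d$-subspace $W$ in $\Pa$, summing over the $n_d$ such subspaces yields $N=n_d\,\ta_{n-d}$. Equating the two expressions for $N$ gives $\sum_{b\in\Ba} b_d\,s_b=n_d\,\ta_{n-d}$.

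The only nonroutine ingredient is the count of hyperplanes through a fixed subspace, which I reduce to the quotient $V/W$; everything else is bookkeeping of the incidence sum. I therefore expect no genuine obstacle, since both identities collapse to elementary Gaussian-binomial counts once the pairs $(W,H)$ are set up and grouped by hyperplane type.
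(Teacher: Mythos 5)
Your proof is correct: both identities follow exactly as you argue, by partitioning the $\ta_n$ hyperplanes of $V(n,q)$ according to their type for the first, and by double counting incident pairs $(W,H)$ with the hyperplanes through a fixed $d$-subspace $W$ counted via the quotient $V/W$ for the second. The paper offers no proof of its own here---the lemma is quoted from Heden and Lehmann \cite{HeLe}---and your argument is precisely the standard one underlying the cited source, so there is nothing to add.
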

\section{Proofs of the main results}\label{sec:3}
Recall that $q$ is a prime power, and  $n,t,$ and $r$ are integers 
such that $n>t>r\geq0$, and $r\equiv n\pmod{t}$. To prove our main result, 
we first need to prove the following two technical lemmas.
\begin{lemma}\label{lem:de} Let $x$ be an integer such that $0<x<q^r$.
For any positive integer $i$, let $\de_i=q^i\cdot\lceil xq^{-i}\ta_{i}\rceil-x\ta_i$. Then the following properties hold:
\begin{enumerate}
\item[(i)] $\lceil xq^{-t}\ta_{t}\rceil=\lceil\frac{x}{q-1}\rceil$. 
\item[(ii)] for $1\leq i\leq t$, we have $0\leq \de_{i}<q^i$, $q\mid(x+\de_{i+1})$, and $\de_{i}=q^{-1}(x+\de_{i+1})\md{q^{i}}$.
\item[(iii)]  $\de_i=0$ if and only if $q^i\mid x$.
\end{enumerate}
\end{lemma}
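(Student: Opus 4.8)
The plan is to derive all three parts by elementary manipulations resting on the single identity $\ta_{i+1}=1+q\ta_i$ (equivalently $\ta_{i+1}-1=q\ta_i$) together with the basic fact that $\lceil z\rceil-z\in[0,1)$ for every real $z$. I would reorganize slightly, handling (ii) and (iii) first, since (i) is really the evaluation of the ceiling at $i=t$ and is the delicate step. Write $a_i=\lceil xq^{-i}\ta_i\rceil$, so that $\de_i=q^ia_i-x\ta_i$. The bound $0\le\de_i<q^i$ is immediate: with $z=x\ta_i/q^i$ we have $\de_i=q^i(\lceil z\rceil-z)$ and $\lceil z\rceil-z\in[0,1)$. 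Equivalently, $q^ia_i$ is the least multiple of $q^i$ that is at least $x\ta_i$, so $\de_i$ is exactly the \emph{defect} of $x\ta_i$ modulo $q^i$, a viewpoint that also drives the rest.

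Next I would establish the divisibility and congruence in (ii). Using $\ta_{i+1}=1+q\ta_i$,
\[x+\de_{i+1}=x+q^{i+1}a_{i+1}-x\ta_{i+1}=q^{i+1}a_{i+1}-x(\ta_{i+1}-1)=q\bigl(q^ia_{i+1}-x\ta_i\bigr),\]
which is manifestly divisible by $q$, giving $q\mid(x+\de_{i+1})$. Dividing by $q$ leaves the integer $N=q^ia_{i+1}-x\ta_i\equiv-x\ta_i\pmod{q^i}$, while $\de_i=q^ia_i-x\ta_i\equiv-x\ta_i\pmod{q^i}$ as well; since $0\le\de_i<q^i$ by the first step, $\de_i$ is precisely the residue of $N=q^{-1}(x+\de_{i+1})$ in $[0,q^i)$, which is the asserted $\de_i=q^{-1}(x+\de_{i+1})\md{q^i}$. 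For (iii), observe that $\de_i=0$ iff $q^ia_i=x\ta_i$, i.e.\ iff $q^i\mid x\ta_i$; since $\ta_i\equiv1\pmod q$ we have $\gcd(\ta_i,q)=1$, hence $\gcd(\ta_i,q^i)=1$ and $q^i\mid x\ta_i\iff q^i\mid x$.

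Finally, for (i) I would evaluate the ceiling directly. Since $q^{-t}\ta_t=\frac{1}{q-1}(1-q^{-t})$, we have $xq^{-t}\ta_t=\frac{x}{q-1}-\ep$ with $\ep=\frac{x}{(q-1)q^t}$, so the goal is to show that subtracting $\ep$ does not change the ceiling of $y=\frac{x}{q-1}$. The hypotheses $0<x<q^r$ and $t>r$ give $\ep<\frac{q^r}{(q-1)q^t}=\frac{1}{(q-1)q^{t-r}}\le\frac{1}{(q-1)q}<\frac{1}{q-1}$. If $(q-1)\mid x$ then $y$ is an integer and $0<\ep<1$ forces $\lceil y-\ep\rceil=y=\lceil y\rceil$; otherwise the fractional part of $y$ is at least $\frac{1}{q-1}>\ep$, so $y-\ep$ and $y$ lie in the same interval $(\lfloor y\rfloor,\lfloor y\rfloor+1]$ and again have equal ceilings. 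I expect this last estimate---controlling the fractional part of $x/(q-1)$ against the correction $\ep$, where the constraint $x<q^r$ and the gap $t-r\ge1$ are exactly what is needed---to be the only genuinely delicate point; parts (ii) and (iii) are formal once the identity $\ta_{i+1}=1+q\ta_i$ is in hand.
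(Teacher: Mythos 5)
Your proposal is correct, and for parts (ii) and (iii) it is essentially identical to the paper's own proof: the same bound $0\leq\de_i<q^i$ from $\lceil z\rceil-z\in[0,1)$, the same factorization $x+\de_{i+1}=q\left(q^i\lceil xq^{-i-1}\ta_{i+1}\rceil-x\ta_i\right)$ resting on $\ta_{i+1}-1=q\ta_i$, the same reduction modulo $q^i$ combined with the normalization $0\leq\de_i<q^i$, and the same coprimality argument $\gcd(q^i,\ta_i)=1$ (which you, unlike the paper, take a moment to justify via $\ta_i\equiv1\pmod{q}$). Part (i) is where you take a genuinely different route. The paper writes $x=\al(q-1)+\be$ with $0\leq\be<q-1$ and sandwiches $\lceil xq^{-t}\ta_t\rceil$ at the value $\al+1$ (when $\be\geq1$) by two separate chained ceiling estimates, one using $\be\geq1$ for the lower bound and one using $\be<q-1$ for the upper bound. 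You instead use the exact identity $xq^{-t}\ta_t=\frac{x}{q-1}-\ep$ with $\ep=\frac{x}{(q-1)q^t}$, and show the perturbation cannot push $x/(q-1)$ past an integer: the hypotheses $0<x<q^r$ and $t>r$ give $0<\ep<\frac{1}{(q-1)q^{t-r}}\leq\frac{1}{(q-1)q}<\frac{1}{q-1}$, while the fractional part of $x/(q-1)$, when nonzero, is at least $\frac{1}{q-1}$. The two arguments branch on the same dichotomy (your $(q-1)\mid x$ versus not is exactly the paper's $\be=0$ versus $\be\geq1$), but yours isolates precisely where $x<q^r$ and $t>r$ enter and replaces the paper's two monotonicity computations with a single fractional-part comparison, making the mechanism more transparent; the paper's version is more mechanical but avoids any discussion of fractional parts. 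Both are fully elementary, of comparable length, and your edge cases (the $q=2$ situation, where $(q-1)\mid x$ always holds, and the strict positivity $\ep>0$ needed when $x/(q-1)$ is an integer) check out.
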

\begin{proof}\
Let $\al$ and $\be$ be integers such that $x=\al(q-1)+\be$, $\al\geq 0$, and $0\leq\be<q-1$. Since $0<x<q^r$ and $r<t$ hold by hypothesis, it follows that
\begin{equation}\label{eq:de0}
0\leq \al <x <q^r<q^t \mbox{ and }\al(q-1)\leq x <q^r<q^t.
\end{equation}

If $\be=0$, then by ~\eqref{eq:de0}, we obtain
\begin{align}\label{eq:de1.1a}
\left\lceil xq^{-t}\ta_{t}\right\rceil=\left\lceil\frac{\al(q^t-1)}{q^t}\right\rceil
= \left\lceil\al-\frac{\al}{q^t}\right\rceil=\al=\left\lceil\frac{x}{q-1}\right\rceil.
\end{align}
Now suppose $1\leq \be <q-1$. First, since $\be \geq 1$, it follows from~\eqref{eq:de0} that
\begin{align}\label{eq:de1.2a}
\left\lceil xq^{-t}\ta_{t}\right\rceil=\left\lceil\frac{[\al(q-1)+\be](q^t-1)}{q^t(q-1)}\right\rceil 
&\geq \left\lceil\frac{[\al(q-1)+1](q^t-1)}{q^t(q-1)}\right\rceil\cr
&=\left\lceil \al+ \frac{(q^t-1)-\al(q-1)}{q^t(q-1)}\right\rceil\cr
&= \al +1.
\end{align}
Second, since $\be< q-1$, it follows 
from~\eqref{eq:de0} and the properties of the ceiling function that
\begin{align}\label{eq:de1.3a}
\left\lceil xq^{-t}\ta_{t}\right\rceil=\left\lceil\frac{[\al(q-1)+\be](q^t-1)}{q^t(q-1)}\right\rceil 
\leq \left\lceil\frac{(\al+1)(q^t-1)}{q^t}\right\rceil 
=\left\lceil \al+1 -\frac{\al+1}{q^t}\right\rceil =\al+1.
\end{align}
Then \eqref{eq:de1.2a} and \eqref{eq:de1.3a} imply that for $1\leq \be<q-1$,
\[
\lceil xq^{-t}\ta_{t}\rceil = \al+1=\left\lceil\frac{x}{q-1}\right\rceil,
\]
 which completes the proof of $(i)$.

\bs We now prove $(ii)$. Since $0\leq \lceil a\rceil-a<1$ holds for any real number $a$,
we have
\[
0\leq \lceil q^{-i}x\ta_{i}\rceil-q^{-i}x\ta_{i}<1 \Longrightarrow \de_i=q^i\lceil xq^{-i}\ta_{i}\rceil -x\ta_{i}<q^i \mbox{ and $\de_i \geq 0$.}
\]
By the definition of $\de_i$, 
we have that
\[ x+\de_{i+1}=x+q^{i+1}\cdot\lceil xq^{-i-1}\ta_{i+1}\rceil-x\ta_{i+1}=q(q^{i}\cdot\lceil xq^{-i-1}\ta_{i+1}\rceil-x\ta_{i}),
\]
and thus,
\begin{align}\label{eq:de2}
q^{-1}(x+\de_{i+1})
&\equiv q^{i}\cdot\lceil xq^{-i-1}\ta_{i+1}\rceil-x\ta_{i}\cr
&\equiv -x\ta_{i}\cr
&\equiv q^{i} \cdot \lceil xq^{-i}\ta_{i}\rceil -x\ta_{i}\cr
&\equiv \de_{i}\pmod{q^{i}}.
\end{align}

\bs Finally, we prove $(iii)$.  Since $\gcd(q^i,\ta_i)=1$
for any positive integer $i$, we have
\[\de_i=q^i\cdot\lceil xq^{-i}\ta_{i}\rceil-x\ta_i=0
\Longleftrightarrow \lceil xq^{-i}\ta_{i}\rceil=xq^{-i}\ta_i
\Longleftrightarrow q^i| x.
\]
\end{proof}
\bs We now prove our main lemma. 
\begin{lemma}\label{lem:main} 
Let $x$ be a positive integer such that $q\mid x$ and $q^2\nmid x$. 
Let $\ell=(q^{n-t}-q^{r})/(q^t-1)$.
 If $r\geq 2$ and $t\geq\ta_r-\lceil x/(q-1)\rceil+2$, then $\mu_q(n,t)\leq \ell  q^t+x$.
\end{lemma}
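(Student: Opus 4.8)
The plan is to recast everything as a statement about subspace partitions and then drive it with the divisibility information carried by hyperplanes. By Remark~\ref{rmk:eq} I view a partial $(t-1)$-spread of size $n_t$ as a subspace partition $\Pa$ of $V(n,q)$ of type $[t^{n_t},1^{n_1}]$ with $n_1=\ta_n-n_t\ta_t$. Writing the deficiency as $e=\ell q^t+q^r-n_t$, a short computation using $\ta_{t+r}=q^t\ta_r+\ta_t$ gives the clean identity $n_1=\ta_r+e\ta_t$, so the assertion $n_t\le\ell q^t+x$ is exactly $e\ge q^r-x$, i.e.\ the lower bound $n_1\ge\ta_r+(q^r-x)\ta_t=\ta_{t+r}-x\ta_t$ on the number of holes. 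I would argue by contradiction, taking the first forbidden value $n_t=\ell q^t+x+1$; here Lemma~\ref{lem:de}(i), with $A=\lceil x/(q-1)\rceil=\lceil xq^{-t}\ta_t\rceil$, puts the hole count in the convenient form $n_1=q^t(\ta_r-A)+\de_t$, and the standing hypothesis $t\ge\ta_r-A+2$ becomes $\ta_r-A\le t-2$.

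The engine is the count in Lemma~\ref{lem:HeLe0}. For a hyperplane $H$ let $h_H$ be the number of holes it contains; the identity $n_t+n_1=1+b_{H,t}q^t+h_Hq$ forces $h_H\equiv c_0\pmod{q^{t-1}}$ for every $H$, where $c_0=(\ta_{n-1}-n_t\ta_{t-1})\bmod q^{t-1}$. Reducing and using $n_t-1\equiv x\pmod{q^{t-1}}$ identifies this residue with $\de_{t-1}$ from Lemma~\ref{lem:de}. Now the hypotheses $q\mid x$ and $q^2\nmid x$ enter decisively: since $\gcd(\ta_{t-1},q)=1$, reducing $c_0\equiv-x\ta_{t-1}$ gives $q\mid c_0$, while Lemma~\ref{lem:de}(iii) gives $c_0\ne0$ because $q^{t-1}\nmid x$; in particular $c_0\ne\ta_{t-1}$, as $\ta_{t-1}\equiv1\pmod q$.

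Next I intersect $\Pa$ with $H$, producing a subspace partition of $H\cong V(n-1,q)$ of type $[t^{b_{H,t}},(t-1)^{\,n_t-b_{H,t}},1^{h_H}]$, whose two smallest parts are $1$ and $t-1$. Heden's Theorem~\ref{He-T} then pins $h_H$ either to the value $\ta_{t-1}$ or to values above an explicit threshold; since $c_0\ne\ta_{t-1}$ the congruence rules out the exceptional value, so in every hyperplane $h_H$ is at least the smallest admissible integer in the class $c_0\pmod{q^{t-1}}$, which strictly exceeds $\ta_{t-1}$. I would then run the hyperplane averaging of Lemma~\ref{lem:HeLe1}, using $\sum_H h_H=n_1\ta_{n-1}$ together with its companion identity for $b_{H,t}$, to convert these uniform per-hyperplane bounds into a lower bound on $n_1$, and compare with the target $n_1\ge\ta_{t+r}-x\ta_t$. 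The bookkeeping is governed by the recursion $\de_i\equiv q^{-1}(x+\de_{i+1})\pmod{q^i}$ of Lemma~\ref{lem:de}(ii), and it is here that the inequality $\ta_r-A\le t-2$ should be consumed.

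The main obstacle is precisely this last quantitative step. A single pass of averaging only yields $n_1\ge(c_0+q^{t-1})\ta_n/\ta_{n-1}$, which is too weak to beat the Drake--Freeman bound of Theorem~\ref{DF} when $t$ is close to $\ta_r$. To reach the exact constant I expect to iterate the intersection down a flag $V\supset H\supset\cdots$, where at codimension $k$ the induced holes satisfy $h\equiv\de_{t-k}\pmod{q^{t-k}}$ (the same recursion as in Lemma~\ref{lem:de}), and/or to bring in the second-moment identity $\sum_H b_{H,t}^2=n_t(n_t-1)\ta_{n-2t}+n_t\ta_{n-t}$ that counts pairs of $t$-subspaces lying in a common hyperplane. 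Turning these local divisibility constraints into the sharp global bound $e\ge q^r-x$ is the delicate heart of the argument, and the place where the hypothesis on $t$ and the fine structure of the $\de_i$ must be used in full.
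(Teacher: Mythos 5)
Your setup coincides with the paper's through its first stage: the reformulation via Remark~\ref{rmk:eq}, the hole count $n_1=q^t(\ta_r-h)+\de_t$ with $h=\lceil x/(q-1)\rceil$ from Lemma~\ref{lem:de}(i), the congruence $b_{H,1}\equiv\de_{t-1}\pmod{q^{t-1}}$ extracted from Lemma~\ref{lem:HeLe0} and Lemma~\ref{lem:de}(ii), and even your guess that one should descend a flag $V\supset H\supset\cdots$ with holes $\equiv\de_{t-k}\pmod{q^{t-k}}$ at codimension $k$ is exactly the paper's mechanism. But the step you yourself flag as ``the delicate heart'' is genuinely missing, and both of your proposed routes around it point the wrong way. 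The paper never converts per-hyperplane lower bounds into a global lower bound on $n_1$ (no second-moment identity appears, and as you concede a single averaging pass is hopelessly weak, of order $q\cdot q^{t-1}$ against a target of order $q^t\ta_r$). Instead the averaging of Lemma~\ref{lem:HeLe1} is run in the opposite direction: writing the hole count of the current partition $\Pa_j$ of $H_j\cong V(n-j,q)$ as $m_{j,1}=c_jq^{t-j}+\de_{t-j}$ with $0\le c_j\le\max\{\ta_r-h-j,0\}$, it produces a hyperplane $H_{j+1}$ with \emph{few} holes, namely $b_{H_{j+1},1}<c_jq^{t-j-1}+q^{-1}\de_{t-j}$; since $0\le q^{-1}\de_{t-j}<q^{t-j-1}$ and $b_{H_{j+1},1}\equiv\de_{t-j-1}\pmod{q^{t-j-1}}$, this forces $b_{H_{j+1},1}=c_{j+1}q^{t-j-1}+\de_{t-j-1}$ with $c_{j+1}\le\max\{c_j-1,0\}$. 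That strict decrease of the coefficient $c_j$ at every codimension step is the entire quantitative content: starting from $c_0\le\ta_r-h$, the hypothesis $t\geq\ta_r-h+2$ guarantees $c_{t-2}=0$ within the $t-2$ available steps (one needs $t-j>2$ so that intersecting keeps the smallest dimension equal to $1$ and the next smallest at least $2$), whence $m_{t-2,1}=\de_2$ with $0<\de_2<q^2$, the nonvanishing coming from $q^2\nmid x$ via Lemma~\ref{lem:de}(iii).

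The contradiction is then local, at the bottom of the flag, not a comparison with the target hole count: $\Pa_{t-2}$ has a tail of exactly $\de_2<q^2$ holes, which Heden's Theorem~\ref{He-T} forbids. If the second smallest dimension $s$ in $\Pa_{t-2}$ is at least $3$, parts (ii) and (iv) give $m_{t-2,1}\geq\min\{(q^s-1)/(q-1),\,2q^{s-1},\,q^s\}>q^2$; if $s=2$, then $q\mid x$ gives $q\mid\de_2$, and part (iv) with $d_2=2$, $d_1=1$ gives $m_{t-2,1}\geq q^2$. Note that this is where the hypothesis $q\mid x$ is actually consumed; in your sketch it only yields $q\mid c_0$ at the top level, where it does no comparable work --- your assertion that the smallest admissible hole count in the class modulo $q^{t-1}$ ``strictly exceeds $\ta_{t-1}$'' is not justified (the smallest admissible value is $\de_{t-1}$ itself, and even granting the claim the resulting averaged bound is far below $n_1$). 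In short: you assembled the right ingredients and guessed the right flag, but the missing idea is the monotone descent $c_{j+1}\leq c_j-1$ obtained by pairing the averaging upper bound with the congruence at each level; that descent is what converts the hypothesis on $t$ into the terminal configuration $m_{t-2,1}=\de_2\in(0,q^2)$ where Heden's tail theorem bites, and without it your outline does not close.
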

\begin{proof} 
 If $x\geq q^r$, then Theorem~\ref{DF} implies the 
nonexistence of a partial $t$-spread of size $\ell q^t+x$. 
Thus, we can assume that $x<q^r$.

Recall that $\ta_i=(q^i-1)/(q-1)$ for any integer $i\geq 1$. 
For an integer $i$, with $2\leq i\leq t$, let 
\begin{equation}\label{eq:dli} 
\de_i=q^i\cdot\lceil xq^{-i}\ta_{i}\rceil-x\ta_i.
\end{equation}
Applying Lemma~\ref{lem:de}(i), we let
\begin{equation}\label{eq:dlt1}
h:=\lceil q^{-t}x\ta_{t}\rceil=\left\lceil\frac{x}{q-1}\right\rceil.
\end{equation}

The proof is by contradiction. So assume that  
$\mu_q(n,t)>\ell  q^t+x$.  Then $\pg(n-1,q)$ has a
 $(t-1)$-partial spread of size $\ell  q^t+1+x$.
Thus, it follows from Remark~\ref{rmk:eq} that there exists a subspace partition $\Pa_0$ of $V(n,q)$ of type $[t^{n_t},1^{n_1}]$, with
\begin{align}\label{eq:ntn1}
&n_t=\ell  q^t+1+x\mbox{, and } \cr
&n_1=q^t\ta_r-x\ta_{t}=q^t(\ta_r-\lceil q^{-t}x\ta_{t}\rceil)+(q^t\lceil q^{-t}x\ta_{t}\rceil-x\ta_{t})=q^t(\ta_r-h)+\de_t,
\end{align}
where $h$ is given by~\eqref{eq:dlt1} and $\de_t$ is given by~\eqref{eq:dli}.

We will prove by induction that for each integer $j$ with $0\leq j\leq t-2$,  there exists  a subspace partition $\Pa_{j}$ of $H_j\cong V(n-j,q)$ of type 
\begin{equation}\label{ih}
[t^{m_{j,t}},(t-1)^{m_{j,t-1}}, \dots,(t-j)^{m_{j,t-j}},1^{m_{j,1}}],
\end{equation}
where $m_{j,t},\ldots,m_{j,t-j}$ are nonnegative integers such that 
\begin{equation}\label{prp1}
\sum_{i=t-j}^t m_{j,i}=n_t=\ell  q^t+1+x,
\end{equation} 
and where $m_{j,1}$ and $c_j$ are integers such that
\begin{equation}\label{prp2}
m_{j,1}=c_jq^{t-j}+\de_{t-j},\mbox{ and } 0\leq c_j\leq\max\{\ta_r-h-j,0\}.
\end{equation} 
The base case, $j=0$, holds since $\Pa_0$ is 
a subspace partition of $H_0=V(n,q)$ with type $[t^{n_t},1^{n_1}]$, 
and letting $m_{0,t}=n_t$ and $m_{0,1}=n_1$,  $\Pa_0$ is of type
given in~\eqref{ih}, and 
 it satisfies the properties given in~\eqref{prp1} 
and~\eqref{prp2}. 

For the inductive step, suppose that for some $j$, with $0\leq j<t-2$,
 we have constructed a subspace partition $\Pa_{j}$ of $H_{j}\cong V(n-j,q)$ of the type given in~\eqref{ih}, and with the properties given in~\eqref{prp1} and~\eqref{prp2}.
We then use Lemma~\ref{lem:HeLe1} to determine the average, $b_{avg,1}$, of the values $b_{H,1}$ over all hyperplanes $H$ of $H_{j}$. We have
\begin{align}\label{eq:5}
b_{avg,1}:=\frac{m_{j,1} \ta_{n-1-j}}{ \ta_{n-j}}
&=\left(c_jq^{t-j}+\de_{t-j}\right)\left(\frac{q^{n-1-j}-1}{q^{n-j}-1}\right)\cr
&<(c_jq^{t-j}+\de_{t-j})q^{-1}\cr
&=c_jq^{t-j-1}+q^{-1}\de_{t-j}.
\end{align} 
It follows from~\eqref{eq:5} that there exists a hyperplane $H_{j+1}$ of $H_{j}$ with 
\begin{equation}\label{eq:6}
b_{H_{j+1},1}\leq b_{avg,1}<c_jq^{t-j-1}+q^{-1}\de_{t-j}.
\end{equation}
Next, we apply Lemma~\ref{lem:HeLe0} to the subspace partition 
$\Pa_j$ and the hyperplane $H_{j+1}$ of $H_{j}$ to obtain:
\begin{align}\label{eq:7.1}
1+b_{H_{j+1},1}\;q+\sum_{i=t-j}^t b_{H_{j+1},i}\;q^i
&=|\Pa_j|\cr
&=n_t+m_{j,1}\cr
&=\ell  q^{t}+ 1+x+c_jq^{t-j}+\de_{t-j},
\end{align}
where $0\leq c_j\leq \max\{\ta_r-h-j,0\}$. Simplifying~\eqref{eq:7.1} yields
\begin{align}\label{eq:7.2}
b_{H_{j+1},1}+\sum_{i={t-j}}^{t} b_{H_{j+1},i}\;q^{i-1}
&=\ell  q^{t-1}+c_jq^{t-j-1}+q^{-1}(x+\de_{t-j}).
\end{align}
Then, it follows from~Lemma~\ref{lem:de}(ii) and~\eqref{eq:7.2} that
\begin{equation}\label{eq:8}
b_{H_{j+1},1}\equiv q^{-1}(x+\de_{t-j})\equiv \de_{t-j-1} \pmod{q^{t-j-1}}.
\end{equation}
Since $0\leq q^{-1}\de_{t-j}<q^{t-j-1}$ by~Lemma~\ref{lem:de}(ii), it follows from~\eqref{eq:6} and~\eqref{eq:8} that there exists a nonnegative integer $c_{j+1}$ such that 
\begin{align}\label{prp2+}
&b_{H_{j+1},1}=c_{j+1}q^{t-j-1}+\de_{t-j-1} 
\mbox{ and } \cr
&0\leq c_{j+1}\leq\max\{c_j-1,0\}\leq  \max\{\ta_r-h-j-1,0\}.
\end{align}
Let $\Pa_{j+1}$ be the subspace partition of $H_{j+1}$ defined by:
\[\Pa_{j+1}=\{W\cap H_{j+1}:\; W\in \Pa_{j}\},
\]
and by the definition made in~\eqref{prp2+}, let $m_{j+1,1}=b_{H_{j+1},1}$.
Since $t-j>2$ and $\dim(W\cap H_{j+1})\in\{\dim W,\dim W-1\}$ for 
each $W\in\Pa_{j}$, it follows that $\Pa_{j+1}$ is a subspace partition of $H_{j+1}$ of type 
\begin{equation}\label{ih+}
[t^{m_{j+1,t}},(t-1)^{m_{j+1,t-1}}, \dots,(t-j-1)^{m_{j+1,t-j-1}},1^{m_{j+1,1}}],
\end{equation}
where $m_{j+1,t},m_{j+1,t-1},\ldots,m_{j+1,t-j-1}$ 
are nonnegative integers such that
\begin{equation}\label{prp1+}
\sum_{i=t-j-1}^t m_{j+1,i}=\sum_{i=t-j}^t m_{j,i}=n_t.
\end{equation}
  
The inductive step follows since $\Pa_{j+1}$ is a subspace partition of $H_{j+1}\cong V(n-j-1,q)$
of the type given in~\eqref{ih+}, which satisfies the conditions in~\eqref{prp2+} and~\eqref{prp1+}. 

\bs Thus far, we have shown that the desired subspace partition 
$\Pa_j$ of $H_j$ exists for any integer
$j$ such that $0\leq j\leq t-2$. 
Since $q^2\nmid x$ by hypothesis, Lemma~\ref{lem:de}(iii) implies that 
$\de_{t-j}\not=0$ for $j\in[0,t-2]$. Thus, $m_{j,1}=c_jq^{t-j}+\de_{t-j}\not=0$ for  $j\in[0,t-2]$. If $j\in[\ta_r-h,t-2]$, then it follows 
from~\eqref{prp2} that $c_j=0$, and thus, $m_{j,1}=\de_j\not=0$.
In particular, since $t\geq\ta_r-h+2$, we have $c_{t-2}=0$
and $m_{t-2,1}=\de_2\not=0$.
For the final part of the proof, we set $j=t-2$, and then show that the existence of the subspace partition $\Pa_{t-2}$ of $H_{t-2}$ leads to a contradiction.

It follows from the above observations and Lemma~\ref{lem:de}(ii) that
\begin{equation}\label{mk1}
m_{t-2,1}=\de_{2}=q^2\lceil xq^{-2}\ta_{2}\rceil-x\ta_{2}\mbox{ and } 0<\de_2<q^2.
\end{equation}
Since $m_{t-1,2}>0$, the smallest dimension of a subspace in $\Pa_{t-2}$ 
is $1$. So let $s\geq 2$ be the second smallest dimension of a subspace in 
$\Pa_{t-2}$. (Note that the existence of $s$ follows from~\eqref{prp1}.) 
To derive the final contradiction, we consider the following cases.

\bs\n {\bf Case 1:} $s\geq3$.

Then by applying Theorem~\ref{He-T}(ii)\&(iv)
to the subspace partition $\Pa_{t-2}$ with $d_2=s$  and $d_1=1$, we obtain  $m_{t-2,1}\geq \min\{(q^s-1)/(q-1),2q^{s-1},q^s\}>q^2$, which contradicts the fact that $m_{t-2,1}<q^2$ given by~\eqref{mk1}. 

\bs\n {\bf Case 2:} $s=2$.

Since $q\mid x$ by hypothesis, it follows from~\eqref{mk1} that $q\mid m_{t-2,1}$. Thus, by applying Theorem~\ref{He-T}(iv) to $\Pa_{t-2}$ with $d_2=s=2$  and $d_1=1$, we obtain  
$m_{t-2,1}\geq q^2$, which contradicts the fact that $m_{t-2,1}<q^2$ given 
by~\eqref{mk1}.
\end{proof}
We are now ready to prove Theorem~\ref{thm:mq} and Corollary~\ref{cor:M}.
\begin{proof}[Proof of Theorem~\ref{thm:mq}]
Recall that
\begin{equation}\label{eq:c2-2}
\mbox{$c_1\equiv t-2\pmod{q}$, $0\leq c_1<q$, and } c_2=\begin{cases} 
q & \mbox{if }  q^2\mid \left((q-1)(t-2)+c_1\right),\\ 
0& \mbox{if }  q^2\nmid \left((q-1)(t-2)+c_1\right).
\end{cases}
\end{equation}
Define
\begin{equation}\label{eq:x1}
x:=q^r-(q-1)(t-2)-c_1+c_2.
\end{equation}
Since $r\geq 2$, it follows from~\eqref{eq:c2-2} and~\eqref{eq:x1} that:
\begin{enumerate}
\item[(a)] If $q^2\mid \left((q-1)(t-2)+c_1\right)$, then $c_2=q$, 
and also, $q^2\mid \left(q^r-(q-1)(t-2)-c_1\right)$. 
Thus, $x\equiv q\not\equiv 0\pmod{q^2}$.
\item[(b)] If $q^2\nmid \left((q-1)(t-2)+c_1\right)$, then 
$c_2=0$, and also, $q^2\nmid \left(q^r-(q-1)(t-2)-c_1\right)$. Thus, 
$x=q^r-(q-1)(t-2)-c_1\not\equiv 0\pmod{q^2}$.
\end{enumerate}
\n Thus, $q^2\nmid x$ holds in all cases.

Also, since $c_1\equiv t-2\pmod{q}$ by~\eqref{eq:c2-2}, we have 
$t-2=\alpha q+c_1$ for some nonnegative integer $\alpha$. Thus, it follows from~\eqref{eq:x1} that 
\begin{equation}\label{eq:x2} 
x=q^r-\alpha q(q-1)-c_1q+c_2.
\end{equation}
Since $c_2\in\{0,q\}$ by~\eqref{eq:c2-2}, it follows from~\eqref{eq:x2} that 
$q\mid x$.

Moreover, since $0\leq c_1\leq q-1$ and $c_2 \in \{0, q\}$, 
we obtain
\begin{align}\label{eq:t}
&x=q^r-(q-1)(t-2)-c_1+c_2\geq q^r-(q-1)(t-2)-(q-1)\cr
&\Longrightarrow
\frac{x}{q-1}\geq \frac{q^r-1}{q-1}+\frac{1}{q-1}-t+1\cr
&\Longrightarrow
\left\lceil \frac{x}{q-1}\right\rceil\geq \frac{q^r-1}{q-1}-t+2\cr
&\Longrightarrow
t\geq \ta_r-\left\lceil \frac{x}{q-1}\right\rceil+2.
\end{align}
Since the hypothesis holds from the above observations, Lemma~\ref{lem:main} yields
\[\mu_q(n,t)\leq \ell q^t+x=\frac{q^n-q^{t+r}}{q^t-1}+q^r-(q-1)(t-2)-c_1+c_2.\]
Moreover, since $-q+1\leq -c_1+c_2\leq q$, it follows that
\begin{align*}
\mu_q(n,t)
&\leq \frac{q^n-q^{t+r}}{q^t-1}+q^r-(q-1)(t-2)-c_1+c_2 \cr
&\leq \frac{q^n-q^{t+r}}{q^t-1}+q^r-(q-1)(t-2)+q\cr
&=\frac{q^n-q^{t+r}}{q^t-1}+q^r-(q-1)(t-3)+1,
\end{align*}
which concludes the proof of Theorem~\ref{thm:mq}.
\end{proof}
\begin{proof}[Proof of Corollary~\ref{cor:M}]
 Let $f_q(n,t)$ and $g_q(n,t)$ be as defined in the statement of the corollary. Then 
\begin{equation}\label{eq:c1}
g_q(n,t)=\frac{q^n-q^{t+r}}{q^t-1}+q^r-(q-1)(t-2)-c_1+c_2,
\end{equation}
where $c_1$ and $c_2$ are as in~\eqref{eq:c2-2}, and 
\begin{equation}\label{eq:c2}
f_q(n,t)=\frac{q^n-q^{t+r}}{q^t-1}+q^r-\lfloor\omega\rfloor-1,
\end{equation}
where $2\omega=\sqrt{4q^t(q^t-q^r)+1}-(2q^t-2q^r+1)$.

If $r\geq1$ and $t\geq 2r$, then it is straightforward 
to show that (e.g.,see~\cite[Lemma~2]{K}) 
\begin{equation}\label{eq:c3}
\lfloor\omega\rfloor=\left\lfloor\frac{q^r-2}{2}\right\rfloor=\left\lfloor\frac{q^r}{2}\right\rfloor-1.
\end{equation}
Now it follows from~\eqref{eq:c1}--\eqref{eq:c3} that if $t\geq 2r$, then 
\begin{equation}\label{eq:c4}
g_q(n,t)-f_q(n,p)= \left\lfloor\frac{q^r}{2}\right\rfloor-(q-1)(t-2)-c_1+c_2.\end{equation}

We now prove the second part of the corollary for $q>2$.
If $\lceil\frac{\ta_r}{2}\rceil+4\leq t\leq \ta_r$, then by applying~\eqref{eq:c4} with $0\leq c_1<q$ and $c_2\in\{0,q\}$, we obtain  
\begin{align*} 
g_q(n,t)-f_q(n,p)
&\leq \left\lfloor\frac{q^r}{2}\right\rfloor-(q-1)(t-2)+q\cr 
&\leq \left\lfloor\frac{q^r}{2}\right\rfloor-(q-1)\left(\left\lceil\frac{\ta_r}{2}\right\rceil+2\right)+q\cr
&=\left\lfloor\frac{q^r}{2}\right\rfloor-(q-1)\left\lceil\frac{q^r-1}{2(q-1)}\right\rceil-q+2\cr
&\leq\frac{q^r}{2}-(q-1)\left(\frac{q^r-1}{2(q-1)}\right)-q+2\cr
&=5/2-q<0 \quad \mbox{ (since $q>2$)}. \notag
\end{align*}

If $q=2$, then by doing the same analysis as above with $t\geq \left\lceil\frac{\ta_r}{2}\right\rceil+5$ instead of $t\geq \left\lceil\frac{\ta_r}{2}\right\rceil+4$, we obtain $g_q(n,t)-f_q(n,p)<0$. This completes the proof of the corollary.

\bs\n{\bf Acknowledgement:}\
We thank the referees for their detailed comments, suggestions, and corrections which have greatly improved the paper.
\end{proof}


\begin{thebibliography}{18}
\bibitem{An} J. Andr\'e, ${\rm \ddot{U}}$ber nicht-Desarguessche Ebenen mit transitiver Translationsgruppe,
{\em Math Zeit.}  {\bf 60} (1954), 156--186.
\bibitem{Be} A.\ Beutelspacher, Partial spreads in finite projective spaces and partial designs, {\em Math. Zeit.} {\bf 145} (1975), 211--229.
\bibitem{De}P. Dembowski, Finite Geometries, Springer Classics in Mathematics, 1997.
\bibitem{DF} D.\ Drake and J.\ Freeman, Partial $t$-spreads and group constructible
 $(s,r,\mu)$-nets, {\em J. Geom.} {\bf 13} (1979), 211--216.
\bibitem{EiSt} J. Eisfeld and L. Storme, (Partial) $t$-spreads and minimal $t$-covers in finite  spaces, {\em
Lecture notes from the Socrates Intensive Course in Finite Geometry and its Applications, Ghent, April 2000,
Published electronically at http://www.maths.qmul.ac.uk/$\sim$leonard/partialspreads/eisfeldstorme.ps}.
\bibitem{EiStSz} J. Eisfeld , L. Storme , and P. Sziklai, On the spectrum
of the sizes of maximal partial line spreads in $PG(2n,q)$, $n\geq3$, {\em Des. Codes Cryptogr.} {\bf 36}  (2005), 101--110.
\bibitem{Et1} T. Etzion, Perfect byte-correcting codes, {\em IEEE Trans. Inf. Theory} {\bf 44} (1998), 3140--3146.
\bibitem{EV} T. Etzion A. Vardy, Error-correcting codes in projective space, {\em IEEE Trans. Inf. Theory} {\bf 57} (1998), 1165--1173.
\bibitem{EJSSS} S. El-Zanati, H. Jordon, G. Seelinger, P. Sissokho, and L. Spence, The maximum size of a partial $3$-spread in a finite vector space over ${\rm GF}(2)$,
{\em Des. Codes Cryptogr.} {\bf 54} (2010), 101--107.
\bibitem{GR} E. Gorla and A. Ravagnani, Partial spreads in random network coding, {\em Fin. Fields Appl.} {\bf 26} (2014), 104--115.
\bibitem{GaSz} A. G\'acs and T. Sz\"onyi, On maximal partial spreads in $PG(n, q)$,
{\em Des. Codes Cryptogr.} {\bf 29} (2003), 123--129.
\bibitem{He-T} O.\ Heden, On the length of the tail of a vector space partition,
{\em Discrete Math.} 309 (2009), 6169--6180.
\bibitem{He1} O.\ Heden, A survey of the different types of vector space partitions, 
{\em Disc. Math. Algo. Appl.} {\bf 4} (2012), 1--14.
\bibitem{HeLe} O. Heden and J. Lehmann, Some necessary conditions for vector space partitions,
{\em Discrete Math.} {\bf 312} (2012), 351--361.
\bibitem{HeSc} M. Herzog and J. Sch\"onheim, Group partition, factorization and the vector covering problem,
{\em Canad. Math. Bull.} {\bf 15(2)} (1972), 207--214. 
\bibitem{HP} S.\ Hong and A.\ Patel, A general class of maximal codes for  computer applications, {\em IEEE Trans. Comput.} {\bf C-21} (1972), 1322--1331.
\bibitem{JuSt} D. Jungnickel and  L. Storme,
A note on maximal partial spreads with deficiency $q+1$, $q$ even, {\em J. Combin. Theory Ser. A} {\bf 102} (2003),  443--446.
\bibitem{KK} R. K\"oetter and F. Kschischang, A general class of maximal codes for
computer applications, {\em IEEE Trans. Inf. Theory} {\bf 54} (2008), 3575--3591.
\bibitem{K} S.\ Kurz, Improved upper bounds for partial spreads, {\em Des. Codes Cryptogr.} DOI:10.1007/s10623-016-0290-8 (2016). 
\bibitem{K2} S.\ Kurz, Upper bounds for partial spreads, 
{\rm https://arxiv.org/pdf/1606.08581.pdf}.
\bibitem{NS}E. N\u{a}stase and P. Sissokho, The maximum size of a partial spread in a finite projective space, {\rm http://arxiv.org/pdf/1605.04824}. Submitted.
\bibitem{Se} B.\ Segre, Teoria di Galois, fibrazioni proiettive e geometrie non desarguesiane, {\em Ann. Mat. pura Appl.} {\bf 64} (1964), 1--76.
\end{thebibliography}
\end{document}